\definecolor{sepia}{cmyk}{0, 0.83, 1, 0.70}
\newtheorem{theorem}{Theorem}
\newtheorem{lemma}[theorem]{Lemma}
\newtheorem{proposition}[theorem]{Proposition}
\theoremstyle{definition}
\newtheorem{definition}[theorem]{Definition}
\newtheorem{remark}[theorem]{Remark}
\newtheorem{example}[theorem]{Example}
\numberwithin{equation}{section} 
\numberwithin{theorem}{section}  
\numberwithin{figure}{section}   
\numberwithin{table}{section}    
\renewenvironment{proof}[1][\proofname]
{\par
	\pushQED{$\blacksquare$} 
	\normalfont\topsep6\p@\@plus6\p@\relax
	\trivlist
	\item[\hskip\labelsep\bfseries#1\@addpunct{.}]
	\ignorespaces}
{\popQED \endtrivlist\@endpefalse}
\DeclareMathOperator*{\interior}{int}
\DeclareMathOperator*{\fix}{Fix}
\DeclareMathOperator*{\Argmin}{Argmin}
\DeclareMathOperator*{\argmin}{argmin}
\DeclareMathOperator*{\argmax}{argmax}
\DeclareMathOperator*{\prob}{Pr}
\DeclareMathOperator*{\prox}{Prox}
\DeclareMathOperator*{\Mod}{mod}
\begin{document}

\title{\vspace{-4em}\textbf{\Large Finitely Convergent Deterministic and Stochastic  Iterative  Methods for Solving Convex Feasibility Problems}}

\author{Victor I. Kolobov\thanks{Department of Computer Science, The Technion -- Israel Institute of Technology, 32000 Haifa, Israel, kolobov.victor@gmail.com}
\and Simeon Reich\thanks{Department of Mathematics, The Technion -- Israel Institute of Technology, 32000 Haifa, Israel, \protect\\sreich@technion.ac.il}
\and Rafa\l\ Zalas\thanks{Department of Mathematics, The Technion -- Israel Institute of Technology, 32000 Haifa, Israel, \protect\\rzalas@campus.technion.ac.il}}
\maketitle

\begin{abstract}
We propose finitely convergent methods for solving convex feasibility problems defined over a possibly infinite pool of constraints. Following other works in this area, we assume that the interior of the solution set is nonempty and that certain overrelaxation parameters form a divergent series. We combine our methods with a very general class of deterministic control sequences where, roughly speaking, we require that sooner or later we encounter a violated constraint if one exists. This requirement is satisfied, in particular, by the cyclic, repetitive and remotest set controls. Moreover, it is almost surely satisfied for random controls.

\vskip2mm\noindent
\textbf{Keywords:} Metric projection, random control, repetitive control, subgradient projection.
\vskip1mm\noindent
\textbf{Mathematics Subject Classification (2010):} 47J25, 47N10, 90C25.
\end{abstract}

\section{Introduction}
The \textit{convex feasibility problem} (CFP) is one of the fundamental problems in optimization. It is mainly used for (but not limited to) modeling problems in which finding a solution satisfying a given list of constraints is satisfactory, whereas obtaining the optimal solution is not necessary. In this paper we consider the following variant of the CFP defined in a real Hilbert space $\mathcal H$: Find $x\in C\cap Q$ with $C:=\bigcap_{i\in I} C_i$. We assume that each one of the sets $C_i$, $i \in I$, as well as $Q$, is closed and convex and $I:=\{1,2,\ldots,m\}$ for some $m \in \mathbb N_+\cup\{\infty\}$.

Oftentimes it is convenient to represent the constraint set $C_i$ as the fixed point set of an operator $T_i\colon\mathcal H \to \mathcal H$, which satisfies certain conditions -- a variation of firm nonexpansivity. In particular, one can use the metric projection $P_{C_i}$, the proximal operator $\prox_{f_i}$ (when $C_i$ is the set of minimizers of a proper, l.s.c. and convex function $f_i$) or the subgradient projection $P_{f_i}$ (when $C_i$ is the sublevel set of a real-valued, l.s.c. and convex function $f_i$). Note, however, that in some cases $P_{f_i}$ may happen to be discontinuous (see \cite[Example 29.47]{BC17}). For this reason, in this paper we consider a class of operators called \emph{cutters}, which, by some authors, are also called \emph{firmly quasi-nonexpansive}; see \cite[Definition 4.1]{BC17}. A comprehensive overview of this class, citing many relevant references, can be found in \cite{Ceg12}. Here we only note that in view of \cite[Theorem 2.2.5]{Ceg12}, a cutter can be considered a generalization of a firmly nonexpansive mapping, provided it has a fixed point.

There is a great number of deterministic and stochastic Fej\'{e}r monotone algorithms designed for solving the CFP, which are governed by cutter operators; see, for example, \cite{BB96, BC01, BNPH15, BLT15, Ceg12, CC11, CRZ18, Com96, Com01, HLS19, Ned11} to name but a few. A typical result ensures the asymptotic convergence of the generated iterates. For example, this can be weak, norm or linear convergence, but also their stochastic versions. A potential drawback, when using these algorithms, is that the produced iterates may never be feasible, unlike the limit point to which they converge. An example of such a situation has recently been given in \cite[Theorem 7]{LTT20}.

The purpose of this paper is to propose a modification of the iterative method discussed in $\cite{BB96}$, so that the sequence of the generated iterates reaches the solution set $C\cap Q$ within a finite number of steps in the presence of the following constraint qualification: $\interior (C)\cap Q\neq\emptyset$. We emphasize here that our approach is not the only way of achieving finite convergence. Other results that discuss this topic can be found, for instance, in \cite{BD17, BDNP16, CH10, Fuk82, HC08, IM87, Pan15}.

To be more precise, we focus on a projected version of the framework considered in $\cite{BB96}$, which can be formulated as follows:
\begin{equation}\label{int:xk}
  x_0\in Q,\quad
  x_{k+1}:=
    P_Q\left(x_k+\alpha_{k}\sum_{i\in I_k(x_k)}
    \lambda_{i,k}(x_k)\Big(T_i(x_k)-x_k\Big)\right),
\end{equation}
where $\alpha_k\in (0,2]$ is the \emph{relaxation parameter}, the weights $\lambda_{i,k}(x_k)\in [0,1]$ satisfy $\sum_{i\in I_k} \lambda_{i,k}(x_k) = 1$ and $I_k(x_k) \subset I$ is a nonempty and finite set of indices. The idea of the above-mentioned modification is to extend each nonzero vector $(T_i(x_k)-x_k)$ from \eqref{int:xk} by a scalar $r_{[k]} / \varphi_i(x_k)$, where $\{r_k\}_{k=0}^\infty \subset (0,\infty)$ is a sequence of small \emph{overrelaxation} parameters, $[k]\in\{0,\ldots,k\}$ counts all the correction steps up to the $k$-th iterate and $\varphi_i(x_k)\in (0,\infty)$ satisfies a certain boundedness condition; see Theorem \ref{thm:main}.

We observed, that after such a modification, whenever we do a correction step ($x_{k+1}\neq x_k$), we move towards an interior point $z\in \interior(C)\cap Q$ under the assumption that the overrelaxation parameter $r_{[k]}$ is small enough. In particular, we show that
\begin{equation}\label{int:descent}
  \|x_{k+1}-z\|^2\leq \|x_k-z\|^2 - 2 M R r_{[k]},
\end{equation}
where $M>0$ is some constant and $B(z,2R)\subseteq C$. It is not difficult to see that by repeatedly applying the above inequality one could arrive at a contradiction knowing that correction steps happen a considerable number of times and $\sum_{k=0}^{\infty}r_k=\infty$. This simple argument suggests that eventually we should encounter an iterate $x_k\in C\cap Q$ for some $k$.

Special cases of our framework can be found in the literature; see, for example, \cite{CCP11, DePI88, IM86}, where $\varphi_i(x_k):=\|g_i(x_k)\|$ ($g_i(x_k)$ is a subgradient of a convex function $f_i$ describing the sublevel set $C_i$) and \cite{BWWX15, Cro04, Pol01}, where $\varphi_i(x_k):=1$. For a more detailed description of the above-mentioned works, see Table \ref{table}. Here we only note that the convergence analysis in \cite{CCP11, DePI88, IM86} differs significantly from the one presented in \cite{BWWX15, Cro04, Pol01}. In this paper we follow the path set in \cite{BWWX15, Cro04, Pol01}, where one can find weaker forms of inequality \eqref{int:descent}. Moreover, in our approach, by introducing general functionals $\varphi_i$, we show that this path can also be used for $\varphi_i(x) = \|g_i(x)\|$, as is the case in \cite{CCP11, DePI88, IM86}. In particular, we establish inequality \eqref{int:descent} for subgradient projection methods from \cite{CCP11, DePI88, IM86}, a property which was not known before.

Various strategies defining the control sequence $\{I_k\}_{k=0}^\infty$ are known in the literature, ranging from deterministic, where $I_k\colon \mathcal H \to 2^I\setminus \emptyset$, to random, where $I_k\colon \Omega \to 2^I\setminus \emptyset$ are i.i.d. defined in some probability space. In the former case, the most recognizable are almost cyclic, intermittent and repetitive (chaotic) control sequences; see \cite{BB96} but also Table \ref{table}. The common feature of all the above-mentioned deterministic control sequences is that
\begin{equation}\label{int:control}
  \#(\{k \geq 0 \colon  I_k(x)\cap I_+(x)\neq\emptyset \})=\infty
\end{equation}
for all $x \notin C$, where $I_+(x):=\{i\in I \colon x \notin C_i\}$. Moreover, one can show that the random controls almost surely satisfy condition \eqref{int:control} provided that the probability of performing a correction step is positive whenever the iterate is outside the set $C$. We note that the latter condition, which we formally write in \eqref{thm:main:stoch:PrAssumption}, was proposed by Polyak in \cite[Assumption 2]{Pol01}, see also \cite[Assumption 1]{CP01}. What we show in this paper is that condition \eqref{int:control} is actually a sufficient one for obtaining the finite convergence of the modified method \eqref{int:xk} discussed above.

The use of the counter $[k]$ instead of $k$ was originally suggested in \cite[Section 4.2]{Pol01} for a particular variant of our general framework (compare with Table \ref{table}). Note that there are certain situations, where $[k]=k$ or when $[k]$ can simply be omitted; see Remark \ref{rem:squareBrackets}. However, it is important to notice that in some cases, not using the counter $[k]$ may lead to lack of finite convergence. To illustrate this, we provide two counterexamples. In the relatively simple Example \ref{ex:NoFiniteConv}, we show that finite convergence does not occur for the alternating projection method with relaxation applied to two halfspaces in the plane. In Example \ref{ex:NoFiniteConv2}, which is more important, but also more technical, we assume that the control sequence is repetitive, $r_k\downarrow 0$ (monotonically) and $\sum_{k=1}^{\infty} r_k = \infty$. In particular, this answers a question raised in \cite{CCP11} related to the use of repetitive controls (called expanding therein), showing that in general \cite[Theorem 20]{CCP11} cannot hold without a certain technical assumption \cite[Condition 19]{CC11}. On the other hand, in view of our result, \cite[Condition 19]{CC11} is no longer necessary for repetitive controls, when combined with $[k]$.

The contribution of our paper can be summarized as follows: We develop a unified analysis for a large class of finitely convergent iterative methods, which employ certain overrelaxation parameters. We also introduce a very broad class of control sequences, which covers a wide range of deterministic iterative methods and further allows us to consider stochastic iterative methods. As a byproduct, we improve all the results from \cite{BWWX15, CCP11, Cro04, DePI88, IM86, Pol01}; see the last row in Table \ref{table}.

Our paper is organized as follows. In section \ref{sec:preliminaries} we provide basic facts related to cutters. In section \ref{sec:auxiliary} we establish two auxiliary lemmata which are the key tools in our analysis. In particular, Lemma \ref{lem:basic} laid the foundation for establishing inequality \eqref{int:descent}. In section \ref{sec:mainResultsDet} we discuss in detail condition \eqref{int:control} together with several examples. We present there our main result, namely Theorem \ref{thm:main}. In section \ref{sec:mainResultsStoch} we formulate a stochastic counterpart of Theorem \ref{thm:main} (Theorem \ref{thm:main:stoch}). In the \hyperref[sec:Appendix]{Appendix} we provide two counterexamples showing that omitting the counter $[k]$ may cause lack of finite convergence.

\begin{table}[h]
\centering

\resizebox{1.0\linewidth}{!}{
\begin{tabular}{|c | c| c| c| c| c|}
\hline
\thead{Result} &
\thead{Constraints\\and Operators} &
\thead{Constraints\\Qualification}&
\thead{$\varphi_i(x)$}&
\thead{Overrelaxation\\and Relaxation}&
\thead{Control\\Sequence}
\\

\hline 
\makecell{Iusem, Moledo\\\cite[Theorem 1]{IM86}} &
\makecell{$C_i=S(f_i,0)$\\$T_i = P_{f_i}$\\$m\in\mathbb N_+$\\$Q=\mathbb R^n$} &
$\max\limits_{i\in I}f_i(x)<0$ &
\makecell{$\varphi_i(x)=\|g_i(x)\|$\\$g_i(x)\in\partial f_i(x)$\\for $x \notin C_i$} &
\makecell{$r_k \downarrow 0$, $\sum\limits_{k=0}^{\infty}r_k=\infty$\\$\alpha_k\in[\varepsilon,2-\varepsilon]$} &
\makecell{$I_k=I$}
\\

\hline 
\makecell{De Pierro, Iusem\\\cite[Theorem 1]{DePI88}} &
as in \cite{IM86}&
as in \cite{IM86}&
as in \cite{IM86}&
as in \cite{IM86}&
\makecell{almost cyclic}
\\

\hline 
\makecell{Censor, Chen, \\Pajoohesh\\\cite[Theorem 20]{CCP11}} &
as in \cite{IM86}&
as in \cite{IM86}&
as in \cite{IM86}&
as in \cite{IM86}&
\makecell{repetitive\\ (single valued)\\$+$\\ \cite[Condition 19]{CCP11}}
\\

\hline 
\makecell{Polyak\\ \cite[Theorem 1]{Pol01}\\$+$\\\cite[Section 4.2]{Pol01}} &
\makecell{$C_i=S(f_i,0)$\\$T_i = P_{f_i}$\\$m\in\mathbb N_+\cup\{\infty\}$\\ $Q\subset\mathbb R^n$}&
$\interior(C)\cap Q \neq \emptyset$&
$\varphi_i(x)=1$&
\makecell{$r_k = r$ or \\
$r_k \to 0$, $\sum\limits_{k=0}^{\infty}r_k^2=\infty$\\(combined with $[k]$)
\\$\alpha_k=1$}&
\makecell{random\\ (single valued)}
\\

\hline 
\makecell{Crombez\\ \cite[Theorem 2.7]{Cro04}} &
\makecell{$C_i=\fix T_i$\\$T_i$ - cutter\\$m\in\mathbb N_+$\\$Q=\mathbb R^n$\\ }&
$\interior(C) \neq \emptyset$&
$\varphi_i(x)=1$&
\makecell{$r_k = r$\\$\alpha_k=1$}&
\makecell{$I_k(x)\subset I_+(x)$}
\\

\hline 
\makecell{Bauschke, Wang\\Wang, Xu\\\cite[Theorem 3.1]{BWWX15}} &
\makecell{$C=\fix T$\\$T$ - cutter\\$m = 1$\\$Q\subset\mathbb R^n$}&
$\interior(C\cap Q) \neq \emptyset$&
$\varphi_i(x)=1$&
\makecell{$r_k \to 0$\\$\sum\limits_{k=0}^{\infty}\alpha_k r_k=\infty$\\$\alpha_k \in (0,2]$}&
\makecell{x}
\\

\hline 
\makecell{Bauschke, Wang\\Wang, Xu\\\cite[Theorem 3.2]{BWWX15}} &
\makecell{as above}&
$\interior(C)\cap Q \neq \emptyset$&
as above&
\makecell{$r_k \to 0$\\$\sum\limits_{k=0}^{\infty}\alpha_k(2-\alpha_k) r_k^2=\infty$\\$\alpha_k \in (0,2]$}&
\makecell{x}
\\

\hline \hline
\makecell{\textbf{Current Paper}\\ Theorems \ref{thm:main} and \ref{thm:main:stoch}\\(see also Ex. \ref{ex:phi2})} &
\makecell{$C_i=\fix T_i$\\$T_i$ - cutter\\$m\in\mathbb N_+\cup\{\infty\}$\\ $Q\subset\mathcal H$}&
$\interior(C)\cap Q \neq \emptyset$&
\makecell{$\delta\leq\varphi_i(x)\leq\Delta$\\on bounded sets\\$\delta,\Delta\in(0,\infty)$\\(all above)}&
\makecell{$r_k = r$ or \\
$r_k \to 0$, $\sum\limits_{k=0}^{\infty}\alpha_kr_k=\infty$\\(combined with $[k]$)
\\$\alpha_k\in(0,2]$}&
\makecell{well matched\\or\\random}
\\
\hline
\end{tabular}
}
\caption{\footnotesize The symbols $P_{f_i}$ and $S(f_i,0)$ refer to the subgradient projection and the sublevel set, respectively; see Example \ref{ex:subProj}. A constant overrelaxation parameter $r\leq R$ is chosen so that $B(z,2R)\subset C$ for some $z\in Q$ and applies only to $\varphi_i(x)=1$. The result of Crombez featuring string averaging is reduced to singleton strings (\cite[$n(t)=1$]{Cro04}).
    }
\label{table}
\end{table}

\section{Preliminaries} \label{sec:preliminaries}
Let $\mathcal H$ be a real Hilbert space with inner product $\langle \cdot, \cdot \rangle$ and induced norm $\|\cdot\|$.

\begin{definition}\label{def:cutter}
  Let $T\colon \mathcal H \to \mathcal H$ be an operator with $\fix T:=\{z\in\mathcal H \colon T(z)=z\}\neq \emptyset$. We say that $T$ is a \emph{cutter} if $\langle x-T(x), z-T(x)\rangle \leq 0$ for all $x\in\mathcal H$ and $z\in \fix T$.
\end{definition}

\begin{example}[Metric Projection]
  Let $C\subseteq \mathcal H$ be nonempty, closed and convex. The \emph{metric projection} $P_C(x):=\argmin_{z\in C}\|z-x\|$  is a cutter and  $\fix P_C = C$; see, for example, \cite[Theorem 1.2.4]{Ceg12}.
\end{example}

\begin{example}[Proximal Operator]
  Let $f\colon\mathcal H \to \mathbb R \cup\{+\infty\}$ be a lower semicontinuous and convex function. The \emph{proximal operator}  $\prox\nolimits_f (x) := \argmin_{y\in \mathcal H} ( f(y)+\frac 1 2 \|y-x\|^2)$  is firmly nonexpansive and $\fix (\prox_f)=\Argmin_{x\in\mathcal H} f(x)$; see \cite[Propositions 12.28 and 12.29]{BC17}. Thus if $f$ has at least one minimizer, then $\prox_f$ is a cutter; see \cite[Theorem 2.2.5]{Ceg12}.
\end{example}

\begin{example}[Subgradient Projection]\label{ex:subProj}
  Let $f\colon \mathcal{H}\to \mathbb{R}$ be a lower semicontinuous and convex function with nonempty sublevel set $S(f,0):=\{x\in \mathcal{H}\colon f(x)\leq 0\}\neq \emptyset $. For each $x\in \mathcal{H}$, let  $g(x)$  be a chosen subgradient from the subdifferential set $\partial f(x):=\{g\in \mathcal{H}\colon f(y)\geq f(x)+\langle g,y-x\rangle \text{, for all }y\in \mathcal{H}\}$,  which, by \cite[Proposition 16.27]{BC17}, is nonempty.  Note here that we apply \cite[Proposition 16.27]{BC17} to a real-valued functional $f$.  The  \emph{subgradient projection}
  \begin{equation}\label{ex:subProj:eq}
    P_{f}(x):=
    \begin{cases}
      x-\frac{f(x)}{\|g(x)\|^2}g(x), & \mbox{if } f(x)>0 \\
      x, & \mbox{otherwise}
    \end{cases}
  \end{equation}
  is a cutter  and $\fix P_{f}=S(f,0)$; see, for example, \cite[Corollary 4.2.6]{Ceg12}.
\end{example}

Note that both the proximal operator and the subgradient projection extend the notion of the metric projection; see \cite[Examples 12.25 and 29.44]{BC17}. However, as we mentioned earlier in the introduction, the subgradient projection need not be even continuous; see \cite[Example 29.47]{BC17}. A comprehensive overview of cutters can be found in \cite[Chapter 2]{Ceg12}. Below we only recall \cite[Remark 2.1.31]{Ceg12}, which will be used in the sequel.

\begin{proposition}\label{prop:cutter}
  Let $T\colon \mathcal H \to \mathcal H$ be an operator with $\fix T \neq \emptyset$. Then $T$ is a cuttter if and only if $\langle T(x)-x, z-x\rangle \geq \|T(x)-x\|^2$ for all $x\in\mathcal H$ and $z\in \fix T$.
\end{proposition}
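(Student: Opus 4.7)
The plan is to observe that this is a purely algebraic equivalence between two inner-product inequalities, with no analytic or topological content beyond what is already packaged in the definition of a cutter. So I would just expand both sides and recognize them as the same inequality rewritten with a different ``base point.''

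More precisely, I would fix an arbitrary $x \in \mathcal H$ and $z \in \fix T$, and introduce the shorthand $u := T(x) - x$ and $v := z - x$. Then $z - T(x) = v - u$ and $x - T(x) = -u$, so the defining inequality of a cutter becomes
\begin{equation*}
  \langle x - T(x),\, z - T(x)\rangle
  \;=\; \langle -u,\, v - u\rangle
  \;=\; \|u\|^2 - \langle u, v\rangle
  \;\leq\; 0,
\end{equation*}
which is equivalent to
\begin{equation*}
  \langle T(x)-x,\, z-x\rangle
  \;=\; \langle u, v\rangle
  \;\geq\; \|u\|^2
  \;=\; \|T(x)-x\|^2.
\end{equation*}
Since every step in this chain is an ``if and only if,'' this establishes both implications at once and for all admissible $x$ and $z$, which is exactly the claim.

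There is no real obstacle here; the only thing to be careful about is not to quantify $x$ and $z$ prematurely, so that the equivalence is claimed universally rather than at a single pair. I would simply present the computation above with the quantifiers ``for all $x \in \mathcal H$ and $z \in \fix T$'' carried through the chain, and conclude by referring back to Definition \ref{def:cutter}.
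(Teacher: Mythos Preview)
Your proof is correct; the equivalence is indeed a one-line algebraic identity, and your substitution $u=T(x)-x$, $v=z-x$ makes this transparent. The paper does not actually give its own proof of this proposition---it simply recalls the statement from \cite[Remark 2.1.31]{Ceg12}---so there is nothing to compare against.
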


\section{Auxiliary Results} \label{sec:auxiliary}

\begin{lemma} \label{lem:BWWX15}
  Let $T\colon\mathcal H \to \mathcal H$ be a cutter, let $\alpha \in (0,2]$ and let $\rho\colon\mathcal H \to (0,\infty)$. Define the operator $U\colon\mathcal H \to \mathcal H$ by $U(x):=x+\alpha \beta(x) (T(x)- x)$, where
  \begin{equation}\label{lem:BWWX15:beta}
    \beta(x):=
    \begin{cases}\displaystyle
      \frac{\rho(x)+\|T(x)-x\|}{\|T(x)-x\|}, & \mbox{if } T(x) \neq x \\
      0, & \mbox{otherwise}.
    \end{cases}
  \end{equation}
  Assume that $x\notin \fix T$ and $B(y,\rho(x))\subseteq \fix T$. Then we have
  \begin{equation}\label{lem:BWWX15:ineq}
    \|U(x)-y\|^2 \leq \|x-y\|^2 - \frac{2-\alpha}{\alpha} \|U(x)-x\|^2.
  \end{equation}
\end{lemma}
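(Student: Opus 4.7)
The plan is to exploit the cutter characterization from Proposition \ref{prop:cutter} at a carefully chosen point of $\fix T$, namely a point on the boundary of the ball $B(y,\rho(x))$ lying in the direction opposite to $T(x)-x$. This extracts an extra $\rho(x)$-term that matches the $\rho(x)$ appearing in the definition of $\beta(x)$, so the computation then telescopes cleanly.

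First, I would set $d:=T(x)-x$, which is nonzero by assumption, and define
\[
  z := y - \rho(x)\,\frac{d}{\|d\|}.
\]
Since $\|z-y\|=\rho(x)$, the hypothesis $B(y,\rho(x))\subseteq\fix T$ gives $z\in\fix T$. Applying Proposition \ref{prop:cutter} to this $z$ yields
\[
  \|d\|^2 \;\leq\; \langle d,\,z-x\rangle \;=\; \langle d,\,y-x\rangle - \rho(x)\,\|d\|,
\]
so
\[
  \langle T(x)-x,\,y-x\rangle \;\geq\; \|T(x)-x\|\bigl(\|T(x)-x\|+\rho(x)\bigr) \;=\; \beta(x)\,\|T(x)-x\|^2,
\]
where the last equality is simply the definition of $\beta(x)$.

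Next, I would expand the square directly from $U(x)-x = \alpha\beta(x)(T(x)-x)$:
\[
  \|U(x)-y\|^2 \;=\; \|x-y\|^2 + \|U(x)-x\|^2 - 2\alpha\beta(x)\,\langle T(x)-x,\,y-x\rangle.
\]
Substituting the lower bound just obtained,
\[
  \|U(x)-y\|^2 \;\leq\; \|x-y\|^2 + \|U(x)-x\|^2 - 2\alpha\beta(x)^2\|T(x)-x\|^2.
\]

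Finally, I would use the identity $\|U(x)-x\|^2 = \alpha^2\beta(x)^2\|T(x)-x\|^2$, so that $2\alpha\beta(x)^2\|T(x)-x\|^2 = \tfrac{2}{\alpha}\|U(x)-x\|^2$, and combine terms to obtain
\[
  \|U(x)-y\|^2 \;\leq\; \|x-y\|^2 - \frac{2-\alpha}{\alpha}\|U(x)-x\|^2,
\]
which is \eqref{lem:BWWX15:ineq}. The only nontrivial step is the very first one, the choice of the auxiliary point $z$; once that gives the improved inner-product bound with the extra $\rho(x)\|d\|$ summand, the rest is essentially a bookkeeping computation that matches the numerator of $\beta(x)$ perfectly.
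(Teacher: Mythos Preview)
Your proof is correct. The key idea---choosing the auxiliary point $z=y-\rho(x)\,d/\|d\|$ on the boundary of $B(y,\rho(x))$ so as to extract the extra $\rho(x)\|d\|$ summand from the cutter inequality---is exactly the same as in the paper (where this point is called $w$), and the resulting inner-product bound $\langle T(x)-x,\,y-x\rangle \ge \beta(x)\|T(x)-x\|^2$ is equivalent to the paper's inequality \eqref{pr:lem:BWWX15:cutter}.

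The only difference is in the algebraic finish. The paper rewrites $U(x)-y=\alpha u+(1-\alpha)v$ for a specific pair $u,v$, invokes the convex-combination identity $\|\alpha u+(1-\alpha)v\|^2=\alpha\|u\|^2+(1-\alpha)\|v\|^2-\alpha(1-\alpha)\|u-v\|^2$ from \cite[Corollary~2.14]{BC17}, and then bounds $\|u\|^2$ through a chain of inequalities that uses \eqref{pr:lem:BWWX15:cutter} twice. Your route is more direct: you simply expand $\|U(x)-y\|^2=\|x-y\|^2+\|U(x)-x\|^2-2\alpha\beta(x)\langle T(x)-x,y-x\rangle$, plug in the inner-product bound once, and use $\|U(x)-x\|^2=\alpha^2\beta(x)^2\|T(x)-x\|^2$ to collect terms. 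This is shorter and avoids any external identity, so it is a cleaner derivation of the same inequality.
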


\begin{proof}
  The argument follows the proof of \cite[Corollary 2.1(v)]{BWWX15}, which is only presented for a constant overrelaxation $\rho$.  Define
  \begin{equation}\label{pr:lem:BWWX15:z}
    w:= y-\rho(x)\frac{T(x)-x}{\|T(x)-x\|}
  \end{equation}
  and observe that $w\in B(y,\rho(x))\subseteq \fix T$. Since $T$ is a cutter, we have
  \begin{equation}\label{pr:lem:BWWX15:cutter}
    \rho(x)\|T(x)-x\| + \|T(x)-x\|^2  - \langle y-x,T(x)-x\rangle
    = \langle w- T(x), x- T(x)\rangle \leq 0.
  \end{equation}
  On the other hand, by \cite[Corollary 2.14]{BC17} applied to
  \begin{equation}\label{}
    u:= T(x)-y + \rho(x)\frac{T(x)-x}{\|T(x)-x\|} \text{\qquad and \qquad}
    v:=x-y,
  \end{equation}
  we obtain
  \begin{equation}\label{pr:lem:BWWX15:ineq1}
    \|U(x)-y\|^2=\|\alpha u + (1-\alpha)v\|^2 = \alpha \|u\|^2 + (1-\alpha)\|v\|^2 -\alpha(1-\alpha)\|u-v\|^2.
  \end{equation}
  By \eqref{pr:lem:BWWX15:cutter}, we have
  \begin{align}
    \nonumber
    \|u\|^2 & = \|T(x)-y\|^2 + \rho^2(x)
        + \frac{2\rho(x)}{\|T(x)-x\|} \langle (T(x)-x)+ (x-y), T(x)-x\rangle
    \\ \nonumber
    & = \|T(x)-y\|^2 + \rho^2(x) + 2\rho(x) \|T(x)-x\|
        - \frac{2\rho(x)}{\|T(x)-x\|}\langle y-x, T(x)-x\rangle
    \\ \nonumber
    & \leq \|T(x)-y\|^2 - \rho^2(x)
    \\ \nonumber
    & = \|(x-y)+(T(x)-x)\|^2 - \rho^2(x)
    \\ \nonumber
    & = \|x-y\|^2 -\|T(x)-x\|^2 + 2(\|T(x)-x\|^2 -\langle y-x, T(x)-x\rangle) - \rho^2(x)
    \\ \nonumber
    & \leq \|x-y\|^2 -\|T(x)-x\|^2 -2 \rho(x) \|T(x)-x\| - \rho^2(x)
    \\
    & = \|x-y\|^2- (\rho(x) + \|T(x)-x\|)^2.
  \end{align}
  Consequently,
  \begin{equation}
    \alpha \|u\|^2 \leq \alpha \|x-y\|^2- \frac{1}{\alpha}\|U(x)-x\|^2.
  \end{equation}
  Moreover,
    \begin{equation}\label{}
    \alpha(1-\alpha)\|u-v\|^2
    = \frac{1-\alpha}{\alpha}\|U(x)-x\|^2.
  \end{equation}
  Combining this with \eqref{pr:lem:BWWX15:ineq1}, we arrive at \eqref{lem:BWWX15:ineq}, which completes the proof.
\end{proof}

The following lemma is a key tool in our analysis.  We use it, in particular,  to derive estimate \eqref{int:descent}. Before proceeding, recall that $I_+(x):=\{i\in I \colon x \notin C_i\}$, $x\in\mathcal H$.

\begin{lemma} \label{lem:basic}
  Assume that $C_i=\fix T_i$ for given cutter operators $T_i\colon\mathcal H \to \mathcal H$, $i\in I$. Moreover, let $\alpha\in (0,2]$, let $\rho_i\colon\mathcal H \to (0,\infty)$, $i \in I$, and let $J\colon\mathcal H \to 2^I\setminus \{\emptyset\}$ satisfy $\sup_{x\in \mathcal H} \#(J(x))<\infty$. Furthermore, let $\lambda_j \colon \mathcal H \to [0,1]$ be such that $\sum_{j\in J(x)}\lambda_j(x)=1$. Define the operator $V\colon\mathcal H \to \mathcal H$ by
  \begin{equation}\label{lem:basic:V}
    V(x):=x+\alpha\sum_{j\in J(x)}\lambda_j(x)\beta_j(x)(T_j(x)- x),
  \end{equation}
  where
  \begin{equation}\label{}
    \beta_j(x):=
    \begin{cases}\displaystyle
      \frac{\rho_j(x)+\|T_j(x)-x\|}{\|T_j(x)-x\|}, & \mbox{if } T_j(x) \neq x \\
      0, & \mbox{otherwise}.
    \end{cases}
  \end{equation}

  Assume that $C\cap Q \neq \emptyset$ and that the weights $\lambda_j$ satisfy the inequality $\lambda_j(x)\geq \lambda>0$ for all $x \in \mathcal H$ and $j\in J_+(x) :=J(x)\cap I_+(x)$. Then
  \begin{equation}\label{lem:basic:FixV}
    \fix (P_Q V)=Q\cap \fix V
    \quad\text{and}\quad
    \fix V= \Big\{x \colon x\in \bigcap_{j\in J(x)}\fix T_j \Big\} .
  \end{equation}
  Moreover, assume that there are $z\in Q$ and $R>0$ such that $B(z,2R)\subseteq C$. Then for all $x \notin C$ with $\rho(x):=\max_{j\in J_+(x)}\rho_j(x)\leq R$, we have
  \begin{equation}\label{lem:basic:ineq}
    \|P_Q(V(x))-z\|^2 \leq \|x-z\|^2 - 2\alpha \lambda R \rho(x).
  \end{equation}
\end{lemma}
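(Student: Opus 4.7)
Write $d_j := T_j(x)-x$ throughout and note two simplifications used constantly: (i) $\beta_j(x)\,d_j = 0$ whenever $j\in J(x)\setminus J_+(x)$, so every sum in $V$ may silently be restricted to $J_+(x)$; (ii) for $j\in J_+(x)$ one has $\beta_j(x)\|d_j\| = \rho_j(x)+\|d_j\|$.

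For the fixed point identities in \eqref{lem:basic:FixV}, the inclusions $\supseteq$ are immediate from (i). For the reverse inclusions, I pick any $w\in C\cap Q$ (nonempty by assumption) and apply Proposition \ref{prop:cutter} to each $T_j$, which gives $\langle d_j,w-x\rangle \geq \|d_j\|^2$. If $V(x)=x$, then
\[
0 \;=\; \langle V(x)-x,w-x\rangle \;=\; \alpha\sum_{j\in J(x)}\lambda_j(x)\beta_j(x)\langle d_j,w-x\rangle \;\geq\; \alpha\sum_{j\in J_+(x)}\lambda_j(x)\beta_j(x)\|d_j\|^2 \;\geq\; 0,
\]
and since $\lambda_j(x)\geq\lambda>0$ on $J_+(x)$ and $\beta_j(x)>0$ when $d_j\neq 0$, this forces $J(x)\cap J_+(x)=\emptyset$, hence $x\in \bigcap_{j\in J(x)}\fix T_j$. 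For $\fix(P_Q V)$, if $P_Q(V(x))=x$ then $x\in Q$ and the cutter property of $P_Q$ gives $\langle V(x)-x,w-x\rangle \leq 0$ for every $w\in Q$; taking $w\in C\cap Q$ and combining with the reverse inequality above forces $V(x)=x$ by the same chain.

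For the main inequality \eqref{lem:basic:ineq} I first discard the projection via $\|P_Q(V(x))-z\|\leq \|V(x)-z\|$ (since $z\in Q$) and then bound $\|V(x)-z\|^2$ directly. The crucial input is a \emph{strengthened} cutter inequality obtained from the full radius of the inscribed ball: for $j\in J_+(x)$ the point $w_j := z - 2R\,d_j/\|d_j\|$ lies in $B(z,2R)\subseteq C\subseteq \fix T_j$, and Proposition \ref{prop:cutter} applied at $w_j$ yields
\[
\langle d_j,\,z-x\rangle \;\geq\; 2R\|d_j\|+\|d_j\|^2.
\]
I then expand
\[
\|V(x)-z\|^2 \;=\; \|x-z\|^2 + 2\alpha\sum_{j}\lambda_j(x)\beta_j(x)\langle x-z,d_j\rangle + \alpha^2\Bigl\|\sum_{j}\lambda_j(x)\beta_j(x) d_j\Bigr\|^2,
\]
substitute the displayed inequality into the cross term, and bound the quadratic term by Jensen's inequality (convexity of $\|\cdot\|^2$ with weights $\lambda_j$ summing to $1$), getting $\sum_j\lambda_j(x)\beta_j(x)^2\|d_j\|^2 = \sum_{j\in J_+(x)}\lambda_j(x)(\rho_j(x)+\|d_j\|)^2$. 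Collecting the coefficient of $\lambda_j(x)(\rho_j(x)+\|d_j\|)$ in the resulting sum, the $j$-th contribution becomes
\[
-\alpha\,\lambda_j(x)\bigl(\rho_j(x)+\|d_j\|\bigr)\Bigl[4R-\alpha\rho_j(x)+(2-\alpha)\|d_j\|\Bigr].
\]
Since $\alpha\in(0,2]$ and $\rho_j(x)\leq\rho(x)\leq R$, the bracket is at least $2R$; dropping the nonnegative $\|d_j\|$ and keeping only an index $j^\star\in J_+(x)$ that attains $\rho(x)$ (for which $\lambda_{j^\star}(x)\geq\lambda$) yields exactly $\|V(x)-z\|^2\leq \|x-z\|^2-2\alpha\lambda R\rho(x)$.

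The delicate point is the quadratic correction $\alpha^2\|V(x)-x\|^2$: a straight application of Lemma \ref{lem:BWWX15} would produce a factor $(2-\alpha)/\alpha$ that degenerates at $\alpha=2$ and cannot be converted into the required \emph{linear} lower bound $2\alpha\lambda R\rho(x)$. Exploiting the full radius $2R$ of the inscribed ball (rather than the smaller radius $\rho_j(x)$ used to define $\beta_j$) in the cutter inequality is precisely what buys enough slack in the cross term to absorb the quadratic correction uniformly across $\alpha\in(0,2]$ and for arbitrary finite convex combinations.
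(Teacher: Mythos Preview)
Your proof is correct, and for the fixed point identities it is essentially the paper's argument. For the descent inequality \eqref{lem:basic:ineq}, however, you take a genuinely different route.

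The paper works operator by operator: it introduces $U_j(x):=x+\alpha\beta_j(x)(T_j(x)-x)$, applies Lemma~\ref{lem:BWWX15} to each $U_j$ with the ball $B(y,\rho_j(x))\subseteq B(z,2R)$, and then makes the specific choice $y:=z-R\,(U_j(x)-x)/\|U_j(x)-x\|$ to extract an additional term $2R\|U_j(x)-x\|$; this yields $\|U_j(x)-z\|^2\leq\|x-z\|^2-2\alpha R\rho_j(x)$ for each $j\in J_+(x)$, and the convex combination is taken only at the end. In effect the radius $2R$ is split as $\rho_j(x)+R$ and used in two separate steps.

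You bypass Lemma~\ref{lem:BWWX15} entirely: you expand $\|V(x)-z\|^2$ directly, feed the \emph{full} radius $2R$ into a single application of Proposition~\ref{prop:cutter} (via $w_j=z-2R\,d_j/\|d_j\|$) to control the cross term, bound the square of the convex combination by Jensen, and then observe that the resulting bracket $4R-\alpha\rho_j(x)+(2-\alpha)\|d_j\|$ is at least $2R$ under $\rho_j(x)\leq R$ and $\alpha\leq 2$. This is shorter and more elementary, and it makes transparent exactly where the hypothesis $\rho(x)\leq R$ is spent. The paper's route is more modular (Lemma~\ref{lem:BWWX15} is of independent interest), but yours gets to \eqref{lem:basic:ineq} with less machinery. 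One small remark: your closing comment that Lemma~\ref{lem:BWWX15} ``degenerates at $\alpha=2$'' is slightly unfair to the paper's argument, since the paper's subsequent choice of $y$ recovers a term that survives at $\alpha=2$; the two approaches ultimately exploit the same slack coming from $B(z,2R)\subseteq C$, just packaged differently.
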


\begin{proof}
   We  first  show \eqref{lem:basic:FixV}. To this end, assume that $Q\cap C \neq\emptyset$ and define $F:=\{x \colon x\in \bigcap_{j\in J(x)}\fix T_j \}$. Observe that $C\subseteq F$ and thus $F\neq\emptyset$. Moreover, it is not difficult to see that the inclusion $Q\cap F \subseteq \fix P_Q V$ follows from the definition of $V$. It suffices to show that $\fix P_QV\subseteq Q\cap F$. Clearly, by the definition of the metric projection, $P_QV \subseteq Q$ and consequently, $\fix P_QV\subseteq Q$. Let $x \in \fix P_QV$ and suppose to the contrary that $x\notin F$, that is, $J_+(x)\neq\emptyset$. Since $P_Q$ is a cutter, we have
  \begin{equation}\label{pr:thm:basic:PQVx1}
    \langle Vx- P_Q(V(x)), z-  P_Q(V(x))\rangle \leq 0
  \end{equation}
  for all $z\in Q$. On the other hand, since each $T_j$ is a cutter, for all $x\in \mathcal H$ and $z\in \fix T_j$, we have, by Proposition \ref{prop:cutter},
  \begin{equation}\label{pr:thm:basic:cutterTi}
    \langle T_j(x)-x, z-x\rangle \geq \|T_j(x)-x\|^2.
  \end{equation}
  Since $x=P_Q(V(x))$, for any $z\in Q\cap C$, we arrive at
  \begin{align}\label{pr:thm:basic:PQVx2} \nonumber
    \langle V(x)-P_Q(V(x)),  z -P_Q(V(x))\rangle &=
    \langle V(x)-x, z-x\rangle  \\ \nonumber
    & = \alpha\sum_{j\in J_+(x)} \lambda_j(x)\beta_j(x) \langle T_j(x)-x, z-x \rangle \\
    & \geq \alpha\sum_{j\in J_+(x)}\lambda_j(x)\beta_j(x) \|T_j(x)-x\|^2  >0,
  \end{align}
  which is in contradiction with \eqref{pr:thm:basic:PQVx1}. Consequently $J_+(x)=\emptyset$ and $Q\cap F=\fix P_QV$, as claimed.

  Next we show that \eqref{lem:basic:ineq} holds for all $x\notin C$ with $\rho(x)\leq R$. To this end, for each $i\in I$, we define an auxiliary operator  $U_i$ by $U_i(x):=x+\alpha \beta_i(x)(T_i(x)-x)$.  Thus $V(x)=\sum_{j\in J(x)}\lambda_j(x) U_j (x)$. Let $x\notin C$ be such that $\rho(x)\leq R$ and let $j\in J_+(x)$. Observe that for any $y\in B(z,R)$, we have $B(y,\rho(x))\subseteq B(z,2R)\subseteq \fix T_j$. Consequently, by Lemma \ref{lem:BWWX15} applied to $U_j$, we have
  \begin{equation}\label{pr:thm:basic:ineq1}
    \|U_j(x)-y\|^2 \leq \|x-y\|^2 - \frac{2-\alpha}{\alpha}\|U_j(x)-x\|^2.
  \end{equation}
  In particular, the above inequality holds for
  \begin{equation}\label{pr:thm:basic:ineq2}
    y:=z-R\frac{U_j(x)-x}{\|U_j(x)-x\|},
  \end{equation}
  which by the choice of $j\in J_+(x)$ is well defined. By expanding the left-hand side of the inequality \eqref{pr:thm:basic:ineq1} with $y$ defined as above, we obtain
  \begin{align}\label{pr:thm:basic:ineq3} \nonumber
    \|U_j(x)-y\|^2 & = \left\|U_j(x)-z+R\frac{U_j(x)-x}{\|U_j(x)-x\|}\right\|^2\\
    &= \|U_j(x)-z\|^2 + \frac{2R}{\|U_j(x)-x\|}\langle U_j(x)-z, U_j(x)-x\rangle + R^2.
  \end{align}
  On the other hand,
  \begin{align}\label{pr:thm:basic:ineq4} \nonumber
    \|x-y\|^2 & = \left\|x-z+R\frac{U_j(x)-x}{\|U_j(x)-x\|}\right\|^2\\
    &= \|x-z\|^2 - \frac{2R}{\|U_j(x)-x\|}\langle z-x, U_j(x)-x\rangle + R^2
  \end{align}
  and
  \begin{equation}\label{pr:thm:basic:ineq5}
    \|U_j(x)-x\|=\alpha(\rho_j(x)+\|T_j(x)-x\|)\geq \alpha \rho_j(x).
  \end{equation}
  By combining \eqref{pr:thm:basic:ineq1} with \eqref{pr:thm:basic:ineq2}, \eqref{pr:thm:basic:ineq3} and \eqref{pr:thm:basic:ineq4}, we arrive at
  \begin{align}\label{pr:thm:basic:ineq6}
    \nonumber
    \|U_j(x)-z\|^2 &\leq \|x-z\|^2-2R\|U_j(x)-x\|-\frac{2-\alpha}{\alpha}\|U_j(x)-x\|^2
    \\ \nonumber
    &\leq \|x-z\|^2-\alpha \rho_j(x) \big(2R +(2-\alpha) \rho_j(x)\big)\\
    &\leq \|x-z\|^2-2 \alpha R \rho_j(x).
  \end{align}
  Observe that \eqref{pr:thm:basic:ineq6} holds for all $j\in J_+(x)$. Moreover, for all $j\in J(x)\setminus J_+(x)$, we have
  \begin{equation}\label{pr:thm:basic:ineq7}
    \|U_j(x)-z\| = \|x-z\|.
  \end{equation}
  Hence, by the nonexpansivity of the metric projection $P_Q$ and the convexity of the squared norm $\|\cdot\|^2$, we have
  \begin{align}\label{}
    \nonumber
    \|P_Q(V(x))-z\|^2
    &=\|P_Q(V(x))-P_Q(z)\|^2\leq \|V(x)-z\|^2 \\ \nonumber
    &\leq \sum_{j\in J(x)\setminus J_+(x)} \lambda_j(x) \|x-z\|^2 + \sum_{j\in J_+(x)} \lambda_j(x)\|U_j(x)-z\|^2
    \\ \nonumber
    & \leq \|x-z\|^2- 2\alpha R\sum_{j\in J_+(x)} \lambda_j(x) \rho_j(x)
    \\
    & \leq \|x-z\|^2-2\alpha\lambda R \rho(x).
  \end{align}
  The above inequality completes the proof.
\end{proof}

The following lemma  extends  \cite[Lemma 2.2]{BWWX15}.

\begin{lemma}[Slater Condition]\label{lem:subdiff}
  For each $i\in I$, let $f_i\colon\mathcal{H}\rightarrow\mathbb{R}$ be a convex and lower semicontinuous function, and assume that $f(z):=\sup_{i\in I} f_i (z)<0$ for some $z\in\mathcal H$. Then for all $r>0$, we have
  \begin{equation}\label{lem:subdiff:ineq}
    \inf \Big\{\|g_i(x)\|\colon  x\in B(z,r),\ f_i(x) > 0,\ g_i(x)\in\partial f_i(x)\Big\}
     \geq  \frac{-f(z)}{r}  =:\delta>0.
  \end{equation}
\end{lemma}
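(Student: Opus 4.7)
The plan is to apply the defining subgradient inequality at the point $z$ and exploit the Slater-type strict feasibility $f(z) < 0$ together with the Cauchy--Schwarz inequality. The whole argument is short; the statement is essentially a quantitative reformulation of the fact that a strictly feasible point forces subgradients at infeasible nearby points to be bounded away from zero.

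First I would fix an arbitrary $x \in B(z,r)$ with $f_i(x) > 0$ and an arbitrary $g_i(x) \in \partial f_i(x)$. By the subgradient inequality applied at $z$,
\begin{equation*}
  f_i(z) \;\geq\; f_i(x) + \langle g_i(x),\, z-x\rangle,
\end{equation*}
which rearranges to
\begin{equation*}
  \langle g_i(x),\, x-z\rangle \;\geq\; f_i(x) - f_i(z).
\end{equation*}
Since $f_i(x) > 0$ and $f_i(z) \leq \sup_{i\in I} f_i(z) = f(z) < 0$, the right-hand side is bounded below by $-f(z) > 0$. Note also that $\|x-z\| > 0$, for otherwise $f_i(x) = f_i(z) < 0$, contradicting $f_i(x) > 0$.

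Next I would apply the Cauchy--Schwarz inequality to the left-hand side, obtaining
\begin{equation*}
  \|g_i(x)\|\cdot\|x-z\| \;\geq\; \langle g_i(x),\, x-z\rangle \;\geq\; -f(z).
\end{equation*}
Dividing by $\|x-z\|$ and then using $\|x-z\| \leq r$ together with $-f(z) > 0$, I get
\begin{equation*}
  \|g_i(x)\| \;\geq\; \frac{-f(z)}{\|x-z\|} \;\geq\; \frac{-f(z)}{r}.
\end{equation*}
Since $x$, $i$ and $g_i(x)$ were arbitrary in the set over which we infimize, taking the infimum yields \eqref{lem:subdiff:ineq}. Positivity of $\delta$ is immediate from $f(z) < 0$ and $r > 0$.

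There is no real obstacle here; the only minor point to watch is keeping track of the distinction between $f_i(z)$ and $f(z) = \sup_i f_i(z)$ (we only need $f_i(z) \leq f(z)$, not equality) and checking that $\|x-z\|$ is strictly positive so that the division is legitimate. Otherwise the proof is a clean one-step application of the subgradient inequality followed by Cauchy--Schwarz.
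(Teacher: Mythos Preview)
Your proof is correct and follows essentially the same approach as the paper: apply the subgradient inequality at $z$, use $f_i(x)>0$ and $f_i(z)\leq f(z)<0$, and then Cauchy--Schwarz together with $\|x-z\|\leq r$. The only cosmetic difference is that the paper bounds $\|x-z\|$ by $r$ directly in the chain of inequalities rather than first dividing by $\|x-z\|$, so it never needs the observation that $x\neq z$.
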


\begin{proof}
  Let $x\in B(z,r)$ and $i\in I_+(x)$. By the subgradient inequality, we have  $f_i(z)\geq f_i(x)+\langle g_i(x),z-x\rangle,$  where $g_i(x)\in\partial f_i(x)$. Since $i\in I_+(x)$, we have $f_i(x)> 0$ and consequently,
  \begin{equation}
    -f(z)
    \leq -f_i(z)
    \leq -f_i(x)+\langle g_i(x),x-z\rangle
    < \langle g_i(x),x-z\rangle
    \leq \| g_i(x)\|\|x-z\|
    \leq \|g_i(x)\|r,
  \end{equation}
  from which \eqref{lem:subdiff:ineq} follows.
\end{proof}

\section{Deterministic Methods}\label{sec:mainResultsDet}
In this section we return to the CFP defined in the introduction.

\begin{definition}\label{def:control:det}
  We call the sequence $\{I_k\}_{k=0}^\infty$ a \emph{control sequence in $I$} if each $I_k\colon\mathcal H \to 2^I\setminus \{\emptyset\}$ is a set-valued mapping with  $M:=\sup_{x,k}\#(I_k(x))<\infty$. If each $I_k$ is single-valued, say $I_k(x)=\{i_k(x)\}$, where $i_k\colon\mathcal H \to I$, then we also call the sequence $\{i_k\}_{k=0}^\infty$ a \emph{control sequence} in $I$. We say that the control sequence in $I$ is \textit{nonadaptive} if each set-valued mapping $I_k$ (single-valued mapping $i_k$) is constant, that is, when $I_k(x)=I_k(y)$ ($i_k(x)=i_k(y)$) for all $x,y\in\mathcal H$. In this case we omit the argument.
\end{definition}

\begin{definition}\label{def:matchedControl}
  We say that the control sequence $\{I_k\}_{k=0}^\infty$ in $I$ is \emph{well matched} with the set $C$ if
  \begin{equation}\label{def:matchedControl:multi}
    \#(x, \{I_k\}_{k=0}^\infty)
    :=\#(\{k \geq 0 \colon  I_k(x)\cap I_+(x)\neq\emptyset \})=\infty
  \end{equation}
  for all $x\notin C$. In particular, a single-valued control sequence $\{i_k\}_{k=0}^\infty$ in $I$ is \emph{well matched} with $C$ if
  \begin{equation}\label{def:matchedControl:single}
    \#(x, \{i_k\}_{k=0}^\infty)
    :=\#(\{k \geq 0 \colon  i_k(x) \in I_+(x)\})=\infty
  \end{equation}
  for all $x\notin C$.
\end{definition}

\begin{example}\label{ex:controls_MV}
  In the literature (see \cite{BB96, Ceg12}) one can find the following examples  of a nonadaptive control sequences $\{I_k\}_{k=0}^\infty$ in $I$ which are well matched with $C$: (a) \emph{cyclic control} defined in a finite $I$ which satisfies  $I_k = I_{(k\Mod s)}$ for all $k=0,1,2,\ldots$, where $s\geq 2$;  (b) \emph{intermittent control} defined in a finite $I$ which satisfies $I=\bigcup_{k=n}^{n+s-1}I_k$ for all $n = 0,1,2,\ldots$ and some integer $s\geq 2$;  and the more general (c) \emph{repetitive control} defined in both finite and infinite $I$, which satisfies $I=\bigcup_{k=n}^{\infty}I_k$ for all $n = 0,1,2,\ldots$.  All three definitions simplify when reduced to a single-valued control sequences $\{i_k\}_{k=0}^\infty$.
\end{example}

\begin{proposition}\label{thm:cardCond}
  Let $\{I_k\}_{k=0}^\infty$ be a nonadaptive control sequence in $I$ and consider the following statements:
  \begin{enumerate}[(i)]
    \item $\{I_k\}_{k=0}^\infty$ is well matched with $C$.
    \item $F_n:=\bigcap_{k=n}^\infty \bigcap_{i\in I_k} C_i =C$ for all $n=0,1,2,\ldots$.
    \item $\{I_k\}_{k=0}^\infty$ is repetitive in $I'$ for some $\emptyset\neq I'\subseteq I$ (that is, $I'\subseteq \bigcup_{k=n}^\infty I_k$ for all $n=0,1,2,\ldots$) and $C=\bigcap_{i\in I'}C_i$ .
  \end{enumerate}
  Then (i)$\Leftrightarrow$(ii)$\Leftarrow$ (iii). Moreover, if $I$ is finite, then (ii)$\Rightarrow$ (iii).
\end{proposition}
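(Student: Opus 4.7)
The plan is to first reformulate condition (ii) in a pointwise fashion so that it aligns directly with the definition of ``well matched'', and then to derive all implications from this reformulation together with pigeonhole arguments once $I$ is assumed finite. The key observation is that $x \in F_n$ is equivalent to the condition $I_k \cap I_+(x) = \emptyset$ for every $k \geq n$, since $x \in F_n$ means exactly that $x \in C_i$ for every index $i$ appearing in some $I_k$ with $k \geq n$. Moreover, the inclusion $C \subseteq F_n$ is automatic because each $I_k$ is a subset of $I$, so only the reverse inclusion is at stake.

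For the equivalence (i)$\Leftrightarrow$(ii), I would argue by contraposition in both directions. If (ii) fails, then some $F_n \setminus C$ is nonempty; any $x$ in this set satisfies $I_k \cap I_+(x) = \emptyset$ for all $k \geq n$, which violates (i). Conversely, if (i) fails, pick $x \notin C$ such that only finitely many indices $k$ satisfy $I_k \cap I_+(x) \neq \emptyset$, and let $n$ exceed the largest such $k$; then $x \in F_n \setminus C$, violating (ii). For (iii)$\Rightarrow$(ii), since $I' \subseteq \bigcup_{k \geq n} I_k$, every $C_i$ with $i \in I'$ appears among the sets intersected to form $F_n$, so $F_n \subseteq \bigcap_{i \in I'} C_i = C$, and equality follows from $C \subseteq F_n$.

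For (ii)$\Rightarrow$(iii) under the finiteness assumption on $I$, I would take the natural candidate $I' := \bigcap_{n \geq 0} \bigcup_{k \geq n} I_k$, the set of indices that occur in infinitely many $I_k$. Since each $I_k$ is nonempty and $I$ is finite, the pigeonhole principle yields $I' \neq \emptyset$, and by construction $I' \subseteq \bigcup_{k \geq n} I_k$ for every $n$. To establish $C = \bigcap_{i \in I'} C_i$, I would argue by contradiction: if there existed $x \in \bigcap_{i \in I'} C_i$ with $x \notin C$, then the already proved implication (ii)$\Rightarrow$(i) would give infinitely many $k$ with $I_k \cap I_+(x) \neq \emptyset$; since $I_+(x)$ is a finite subset of the finite set $I$, a second pigeonhole would produce an index $i \in I_+(x) \cap I'$, contradicting $x \in C_i$.

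The main obstacle is identifying precisely where finiteness of $I$ is indispensable. Both pigeonhole steps in the last implication may fail when $I$ is infinite; a simple witness is $I = \mathbb{N}$ with $I_k = \{k\}$, where the limsup set $I'$ is empty even though (ii) may still hold for suitably chosen $C_i$. This is the structural reason why the final implication can be asserted only in the finite case, and it clarifies why the one-way direction (iii)$\Rightarrow$(ii) survives without this restriction.
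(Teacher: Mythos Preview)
Your proposal is correct and follows essentially the same approach as the paper: contraposition for (i)$\Leftrightarrow$(ii), the observation that $x\in F_n$ iff $I_k\cap I_+(x)=\emptyset$ for all $k\ge n$, and the choice $I'=\limsup_k I_k$ for (ii)$\Rightarrow$(iii). The only cosmetic difference is that for (ii)$\Rightarrow$(iii) the paper locates a single $n$ with $\bigcup_{k\ge n}I_k=I'$ and reads off $C=F_n=\bigcap_{i\in I'}C_i$ directly, whereas you deduce $\bigcap_{i\in I'}C_i\subseteq C$ via the already established (i) and a pigeonhole on $I_+(x)$; both arguments are equivalent.
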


\begin{proof}
  We first show that (i) implies (ii). Suppose to the contrary that the control sequence $\{I_k\}_{k=0}^\infty$ is well matched with $C$ and that for some $n\geq 0$, there exists a point $x \in F_n \setminus C$. Then $x\in C_i$ for all $i\in I_k$ and $k\geq n$, that is, $\#(x, \{I_k\}_{k=0}^\infty)\leq n$. On the other hand, since $x\notin C$, by condition \eqref{def:matchedControl:multi}, $\#(x, \{I_k\}_{k=0}^\infty)=\infty$, which is a contradiction.

  To show that (ii) implies (i), assume that the equality $F_n=C$ holds for all $n=0,1,2,\ldots$ and let $x\notin C$. Then for each $n=0,1,2,\ldots$, consider the smallest $k_n \geq n$ such that $x\notin F_{k_n}$. By the definition of the set $F_n$ and by eventually passing to a subsequence, we may assume that $x\notin \bigcap_{i\in I_{k_n}}C_i$ in view of which, $\#(x,\{I_k\}_{k=0}^\infty)=\infty$.

  It is not difficult to see that (iii) implies (i). Indeed, if $x\notin C$, then, by (iii), $x$ violates at least one constraint $C_i$ for some $i\in I'$. Since the control is repetitive in $I'$, we see that $i\in I_k$ for infinitely many $k$'s and thus $\#(x, \{I_k\}_{k=0}^\infty)=\infty$.

  Assume now that $I$ is finite. We show that (iii) follows from (ii). To this end, define  $I':=\limsup_{k\to \infty}I_k=\bigcap_{n=0}^\infty\bigcup_{k=n}^\infty I_k$  and observe that $i\in I'$ if and only if $i\in I_k$ for infinitely many $k$'s. Since $I$ is finite, we see that $I'\neq \emptyset$. Assume that $I'$ is a proper subset of $I$. For each $i\in I\setminus I'$, there is $n_i\geq 0$ such that $i\notin \bigcup_{k=n_i}^\infty I_k$ and since $I$ is finite, we have $n:=\max_{i\in I\setminus I'}n_i<\infty$. Consequently, $I'=\bigcup_{k=n}^\infty I_k$ and, by (ii), we arrive at $C=F_n=\bigcap_{i\in I'}C_i$. This completes the proof.
\end{proof}

\begin{remark}
  The implication (i)$\Rightarrow$ (iii) may not be true when $I$ is infinite. To see this, consider a decreasing sequence of sets $C_{k+1}\subseteq C_k$ with nonempty intersection $C$ and define $i_k:=k$. The control $\{i_k\}_{k=0}^\infty$ is well matched with $C$, but clearly it is not repetitive in any subset $I'\subseteq I$.
\end{remark}

\begin{example}
  Observe that in view of Definition \ref{def:matchedControl}, if for all $x\in \mathcal H$ and all $k=0,1,2,\ldots$, the set $I_k(x)$ contains at least one index from the set of violated constraints $I_+(x)$, then the control sequence $\{I_k\}_{k=0}^\infty$ is well matched with $C$.  In particular, when $I$ is finite, one could use maximal control sequences such as: (a) the \emph{remotest set control} $i_k(x) := \argmax_{i\in I}d(x,C_i)$; (b) the \emph{maximal displacement control} $i_k(x) :=\argmax_{i\in I}\|T_i(x)-x\|$, when $C_i = \fix T_i$; or (c) the \emph{maximal violation control} $i_k(x) := \argmax_{i\in I} f_i^+(x)$, when $C_i = S(f_i,0)$.
\end{example}

\begin{theorem}\label{thm:main}
  Assume that $C_i=\fix T_i$ for given cutter operators $T_i\colon\mathcal H \to \mathcal H$, $i\in I$. Let $\{I_k\}_{k=0}^\infty$ be a given control sequence in $I$ and let the weights $\lambda_{i,k} \colon \mathcal H \to  [0,1]$ be such that $\sum_{i\in I_k(x)}\lambda_{i,k}(x)=1$. Moreover, let $\{\alpha_k\}_{k=0}^\infty\subset (0,2]$ be a sequence of relaxations and let $\{r_k\}_{k=0}^\infty\subset (0,\infty)$ be a sequence of overrelaxations.  Finally, let $\varphi_i\colon\mathcal H \to (0,\infty)$.  Define the sequence $\{x_k\}_{k=0}^\infty$ by
  \begin{equation}\label{thm:main:xk}
    x_0\in Q,\quad x_{k+1}:=
    P_Q\left(x_k+\alpha_{[k]}\sum_{i\in I_k(x_k)}
    \lambda_{i,k}(x_k)
    \beta_{i,k}(x_k)
    \Big(T_i(x_k)-x_k\Big)\right),
  \end{equation}
  where
  \begin{equation}\label{thm:main:k}
    [0]:=0, \quad [k]:=\#(\{0\leq n \leq k-1\colon x_{n}\neq x_{n+1}\}), \quad k=1,2,\ldots
  \end{equation}
   and
  \begin{equation}\label{thm:main:betak}
    \beta_{i,k}(x):=
    \begin{cases}\displaystyle
      \frac{\frac{r_{[k]}}{\varphi_i(x)} +\|T_i (x)-x\|}{\|T_i(x)-x\|}, & \mbox{if } T_i(x)\neq x \\
      0, & \mbox{otherwise}
    \end{cases}, \quad x\in\mathcal H.
  \end{equation}
  Assume that
  \begin{enumerate}[(i)]
    \item $\interior(C)\cap Q \neq \emptyset$.
    \item $r_k \to 0$ and $\sum_{k=0}^\infty\alpha_kr_k=\infty$.
    \item For each bounded subset $S \subset \mathcal H$, there are $\delta, \Delta \in (0,\infty)$ s.t. $\delta \leq \varphi_i(x) \leq \Delta$ for all $x \in S$.
    \item There is $\lambda > 0$ such that $\lambda_{i,k}(x)\geq \lambda>0$ for all $x$, $k$ and $i\in I_k(x)\cap I_+(x)$.
    \item $\{I_k\}_{k=0}^\infty$ is well matched with the set $C$.
  \end{enumerate}
  If  the sequence $\{x_k\}_{k=0}^\infty$ is bounded (see Examples \ref{ex:phi1} and \ref{ex:phi2}),  then $x_k\in C\cap Q$ for some $k$.
\end{theorem}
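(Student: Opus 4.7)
The plan is to argue by contradiction: assume $x_k\notin C$ for every $k$ (the condition $x_k\in Q$ is automatic from $x_0\in Q$ and the outer projection $P_Q$). By assumption (i), I would fix $z\in\interior(C)\cap Q$ and $R>0$ with $B(z,2R)\subseteq C$. Since $\{x_k\}$ is bounded and each $\varphi_i$ satisfies (iii) on any ball containing the orbit, there exist $\delta,\Delta\in(0,\infty)$ with $\delta\leq\varphi_i(x_k)\leq\Delta$ for all $i\in I$ and $k\geq 0$. Because $r_k\to 0$ by (ii), I may also choose $k_0$ so that $r_j\leq R\delta$ for all $j\geq k_0$; this will ensure that the rescaled overrelaxations $\rho_j(x_k):=r_{[k]}/\varphi_j(x_k)$ satisfy $\rho(x_k)\leq r_{[k]}/\delta\leq R$ whenever $[k]\geq k_0$, which is the side condition required by Lemma \ref{lem:basic}.

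Next I would establish the dichotomy $x_{k+1}\neq x_k \Longleftrightarrow I_k(x_k)\cap I_+(x_k)\neq\emptyset$. For the direction $\Rightarrow$, if no violated constraint is selected then $T_i(x_k)=x_k$ for every $i\in I_k(x_k)$, so the update \eqref{thm:main:xk} collapses to $x_{k+1}=P_Q(x_k)=x_k$. For $\Leftarrow$, I apply the fixed-point characterization \eqref{lem:basic:FixV} of Lemma \ref{lem:basic} to the operator $V$ of \eqref{lem:basic:V} with $J(\cdot)=I_k(\cdot)$, $\alpha=\alpha_{[k]}$, and $\rho_j=r_{[k]}/\varphi_j$: since $I_k(x_k)\cap I_+(x_k)\neq\emptyset$ means $x_k\notin\bigcap_{j\in I_k(x_k)}\fix T_j$, it follows that $x_k\notin\fix(P_Q V)$, that is, $x_{k+1}\neq x_k$. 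Consequently the counter $[k]$ from \eqref{thm:main:k} records precisely the correction steps, and as $k$ ranges through the correction indices $K:=\{k\colon x_{k+1}\neq x_k\}$ the value $[k]$ traverses every nonnegative integer in order.

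Now I would split into two cases. If $K$ is infinite, then for each $k\in K$ with $[k]\geq k_0$ Lemma \ref{lem:basic} yields
$$
\|x_{k+1}-z\|^2 \leq \|x_k-z\|^2 - \frac{2\lambda R}{\Delta}\,\alpha_{[k]}\, r_{[k]},
$$
using the lower bound $\rho(x_k)\geq r_{[k]}/\Delta$; at non-correction steps $\|x_{k+1}-z\|^2=\|x_k-z\|^2$. Telescoping and reindexing correction steps by $j=[k]$ converts the accumulated decrease into $\frac{2\lambda R}{\Delta}\sum_{j\geq k_0}\alpha_j r_j$, which equals $+\infty$ by (ii), contradicting $\|x_{k+1}-z\|^2\geq 0$. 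Otherwise $K$ is finite, so $\{x_k\}$ stabilizes at some $\bar x\notin C$; then the well-matched property (v) produces arbitrarily large $k$ with $I_k(\bar x)\cap I_+(\bar x)\neq\emptyset$, forcing a further correction step beyond stabilization — a contradiction. The main technical obstacle is verifying Lemma \ref{lem:basic}'s hypotheses uniformly along the orbit: specifically, extracting the bounds on $\varphi_i$ uniformly in $i\in I$ (including the case of infinite $I$), and executing the reindexing that replaces $\sum_{k\in K}\alpha_{[k]}r_{[k]}$ by $\sum_j\alpha_j r_j$, which is exactly where the use of the counter $[k]$ (rather than $k$) becomes essential.
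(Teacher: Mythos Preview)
Your proposal is correct and follows essentially the same approach as the paper: the two-case split on whether the set of correction indices is finite, the application of Lemma~\ref{lem:basic} (both the fixed-point characterization \eqref{lem:basic:FixV} and the descent estimate \eqref{lem:basic:ineq}), the bounds $\delta\le\varphi_i(x_k)\le\Delta$ on the orbit, and the telescoping with reindexing $j=[k]$ to reach a contradiction with $\sum_j\alpha_jr_j=\infty$ are all exactly as in the paper's proof. Your explicit formulation of the dichotomy $x_{k+1}\neq x_k\iff I_k(x_k)\cap I_+(x_k)\neq\emptyset$ and of the reindexing mechanism is slightly more detailed than the paper's presentation, but the underlying argument is identical.
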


\begin{proof}
  Since $x_k \in Q$, it suffices to show that $x_k \in C$ for some $k$. For each $k=0,1,2,\ldots$, define the operator $V_k\colon\mathcal H \to \mathcal H$ by
  \begin{equation}\label{pr:main:Vk}
    V_k (x):= x+ \alpha_{[k]}\sum_{i\in I_k^+(x)}
    \lambda_{i,k}(x)
     \beta_{i,k}(x)  \Big(T_i (x)- x\Big),
  \end{equation}
  where $I_k^+(x):=I_k(x)\cap I_+(x)$.  Clearly, we can write $x_{k+1}=P_Q (V_k (x_k)) $. We divide the rest of the proof into two cases.

  \textit{Case 1.} Assume that $n:=\sup_{k\geq 0} [k]<\infty$, in which case $x_n=x_k$ for all $k\geq n$. We show that $x_n\in C$. By Lemma \ref{lem:basic}, the equality $x_{k+1}=x_k$ implies that $x_k \in \fix V_k$, that is, $x_k\in \bigcap_{i\in I_k(x_k)}C_i$. Consequently $\#(x_n,\{I_k\}_{k=0}^\infty) \leq n$ and thus $x_n \in C$. Otherwise, since the control $\{I_k\}_{k=0}^\infty$ is well matched with $C$, we would get $\#(x_n,\{I_k\}_{k=0}^\infty) = \infty$, a contradiction.

  \textit{Case 2.} Assume now that $\sup_{k\geq 0}  [k] =\infty$, that is, the set $ \mathcal N:=\{ n \geq 0 \colon  x_{n}\neq x_{n+1}\}$ is infinite.  There is a point $z\in \interior(C)\cap Q$ and a radius $R>0$ such that the ball $B(z,2R)\subseteq C$. Since the sequence $\{x_k\}_{k=0}^\infty$ is assumed to be bounded, there are $0<\delta \leq \Delta <\infty$ such that $\delta \leq \varphi_i(x_k) \leq \Delta$ for all $k=0,1,2,\ldots$. Consequently, the fraction $r_{[k]}/\varphi_i(x_k)$ can be made arbitrarily small because of the estimate $r_{[k]}/\varphi_i(x_k) \leq r_{[k]}/\delta$. In particular, we may assume that $r_{[k]}/\varphi_i(x_k) \leq R$ for all large enough $k$, say $k \geq K$ and $K \in \mathcal N$.
  Thus, by Lemma \ref{lem:basic} applied to $V:=V_k$ and  $\rho_i(x):=r_{[k]}/\varphi_i(x)$,  we get
  \begin{equation}\label{pr:main:ineq1}
    \|x_{k+1}-z\|^2
    \leq \|x_k-z\|^2 - 2\lambda R \alpha_{[k]}  \max_{i\in I_k^+(x_k)}\frac{r_{[k]}}{\varphi_i(x_k)}
    \leq \|x_k-z\|^2 - 2\lambda R \alpha_{[k]}  \frac{r_{[k]}}{\Delta}
  \end{equation}
  for all $k\in \mathcal N$, $k \geq K$ (compare with \eqref{int:descent}).  On the other hand,
  \begin{equation}\label{pr:main:ineq2}
    \|x_{k+1}-z\|^2 = \|x_k-z\|^2
  \end{equation}
  for all $k\notin \mathcal N$. Consequently, by inductively applying \eqref{pr:main:ineq1} and \eqref{pr:main:ineq2}  to large enough $k$ ($k \geq K$),  we obtain
  \begin{equation}\label{pr:main:ineq3}
    \|x_{k+1}-z\|^2
    \leq \|x_K -z\|^2 - \frac{2\lambda R}{\Delta} \sum_{\substack{n=K\\n\in \mathcal N}}^k \alpha_{[n]} r_{[n]}
  \end{equation}
  and therefore,  since $K\in \mathcal N$, we arrive at
  \begin{equation}\label{pr:main:contradiction}
    \frac{2\lambda R}{\Delta }
    \sum_{n=[K]}^{[k+1]-1} \alpha_n r_n
    = \frac{2\lambda R}{\Delta } \sum_{\substack{n=K\\n\in \mathcal N}}^k \alpha_{[n]} r_{[n]}
    \leq \|x_{K}-z\|^2 .
  \end{equation}
  Since, by assumption, $\sup_{k\geq 0}  [k] =\infty$, we see that the left-hand side in \eqref{pr:main:contradiction} tends to infinity as $k\to \infty$, which is a contradiction. Consequently, we must have $n=\sup_{k\geq 0}  [k]<\infty$, in which case we have already shown that $x_n\in C\cap Q$.
\end{proof}

There are at least two known situations, where we can ensure that the sequence $\{x_k\}_{k=0}^\infty$ is indeed bounded.

\begin{example}  \label{ex:phi1}
  In the setting of Theorem \ref{thm:main}, for each $x \in \mathcal H$, define
  \begin{equation}\label{}
    \varphi_i(x) := 1.
  \end{equation}
  Obviously, $\varphi_i(x)$ satisfies (iii). Furthermore, by applying Lemma \ref{lem:basic}, it is not difficult to see that the sequence $\{x_k\}_{k=0}^\infty$ is bounded. Consequently, $x_k\in C\cap Q$ for some $k$.
\end{example}

\begin{example} \label{ex:phi2}
  In the setting of Theorem \ref{thm:main}, assume that $C_i=\{x\colon  f_i(x)\leq 0\}$ for some convex and lower semicontinuous functions $f_i\colon\mathcal H \to \mathbb R$, $i\in I$. Let $T_i := P_{f_i}$ and let $g_i\colon \mathcal H \to \mathcal H$ be the associated subgradient mapping (see Example \ref{ex:subProj}). For each $x\in \mathcal H$, define
  \begin{equation}\label{ex:phi2:eq}
    \varphi_i(x):=
    \begin{cases}
      \|g_i(x)\|, & \mbox{if } f_i(x)>0 \\
      1, & \mbox{otherwise}.
    \end{cases}
  \end{equation}
  Using \eqref{ex:subProj:eq} and the convention that the summation over the empty set is zero, method \eqref{thm:main:xk} becomes
  \begin{equation}\label{ex:phi2:xk}
    x_0\in Q, \quad x_{k+1} =
    P_Q \left(x_k-\alpha_{[k]} \sum_{i\in I_k^+(x_k)}\lambda_{i,k}(x_k)\frac{r_{[k]} + f_{i}(x_k)}{\|g_i(x_k)\|^2}g_i(x_k)\right).
  \end{equation}
  Assume that (i') $f(z):= \sup_{i\in I} f_i(z) <0$ for some $z\in Q$; (ii') = (ii); and (iii') $\bigcup_{i\in I} \partial f_i (S)$ is bounded for bounded subsets $S\subset \mathcal H$. Then the sequence $\{x_k\}_{k=0}^\infty$ is bounded and assumptions (i')--(iii') imply conditions (i)--(iii). Consequently, $x_k\in C\cap Q$ for some $k$.
\end{example}

\begin{proof}
  We first demonstrate that the sequence $\{x_k\}_{k=0}^\infty$ is bounded. Obviously, the statement is trivial when $n:=\sup_{k\geq 0} [k]<\infty$. Assume now that $\sup_{k\geq 0} [k] =\infty$. It suffices to show that $\|x_{k+1}-z\|\leq\|x_k-z\|$ for all $k$ large enough.

  If $I_k^+(x_k)=\emptyset$, then $x_{k+1}=x_k$ and thus $\|x_{k+1}-z\|=\|x_k-z\|$. On the other hand, if $I_k^+(x_k)\neq\emptyset$, then, by using the nonexpansivity of the metric projection $P_Q$  and the convexity of $\|\cdot\|^2$, we get
  \begin{align} \nonumber
    \|x_{k+1}-z\|^2
    &\leq\left\|(x_k-z) - \alpha_{[k]} \sum_{i\in I_k^+(x_k)}\lambda_{i,k}(x_k)\frac{f_i(x_k) + r_{[k]}}{\|g_i(x_k)\|^2}g_i(x_k)\right\|^2\\
    &\leq \|x_k-z\|^2 + \alpha_{[k]} \sum_{i\in I_k^+(x_k)}\lambda_{i,k}(x_k) \theta_{k,i}(x_k)
    \frac{f_i(x_k) + r_{[k]}}{\|g_i(x_k)\|^2},
  \end{align}
  where
  \begin{equation}\label{}
    \theta_{k,i}(x_k) := \alpha_{[k]}(f_i(x_k)+ r_{[k]})
      -2\left\langle x_k-z,g_i(x_k)\right\rangle, \quad i\in I_k^+(x_k).
  \end{equation}
  Moreover, by combining the subgradient inequality with the inequality $ r_{[k]}\leq -f(z)$, which holds for all $k$ large enough (since $ r_{[k]}\to 0$ as $k\to \infty$), we get
  \begin{equation}\label{}
    \langle x_k-z,g_i(x_k)\rangle
    \geq f_i(x_k)-f_i(z)\geq f_i(x_k)-f(z)
    \geq f_i(x_k)+r_{[k]}.
  \end{equation}
  Consequently, $\theta_{i,k}(x_k) \leq -(2-\alpha_{[k]})(f_i(x_k)+r_{[k]})\leq 0$, which leads to $\|x_{k+1}-x\| \leq \|x_k-z\|$, as asserted.

  Let $S$ be nonempty and bounded subset of $\mathcal H$ and choose $r>0$ so that $S\subset B(z,r)$.  The assumed boundedness of the subdifferential and Lemma \ref{lem:subdiff} imply that there are $\Delta\geq \delta>0$ such that $\delta \leq \|g_i (x)\| \leq \Delta$, where the first inequality holds for all  $x\in S$ and all $i\in I_+(x)$,  whereas the second one holds for all  $x\in S$  and all $i\in I$.  This, when combined with \eqref{ex:phi2:eq}, implies condition (iii).
\end{proof}

\begin{remark}[Constant overrelaxations]
  A careful analysis of the proof of Theorem \ref{thm:main} shows that when $\varphi_i(x) = 1$, $i\in I$, we can use constant overrelaxations $r_k := r \leq R$. This, however, requires the explicit knowledge of the radius $R$, which is not always possible.
\end{remark}

\begin{remark}[{Using $k$ instead of $[k]$}]\label{rem:squareBrackets}
 Observe that if $I_k^+(x) \neq \emptyset$ for all $x\notin C$, then $[k]=k$ as long as $x_k\notin C$. This condition is trivially satisfied if, for example, the number of constraints $m=1$ or when $I_k = I$ for all $k=0,1,2,\ldots$. Furthermore, the square brackets can be partially omitted if $\alpha_k \geq \alpha >0$ and $\sum_{k=0}^{\infty}r_k =\infty$. In this case we may replace ``$\alpha_{[k]}$'' by ``$\alpha_k$'' in \eqref{thm:main:xk} without losing the finite convergence property. If, in addition, we assume that the sequence $\{r_k\}_{k=0}^\infty$ is decreasing and the control is $s$-intermittent for some $s\geq 1$, then we may fully drop the square brackets notation and write ``$r_k$'' instead of ``$r_{[k]}$'' in \eqref{thm:main:betak}. This, however, requires a more detailed discussion, which we present below. We note that if $r_k$ is not monotone, then by using $r_k$ instead of $r_{[k]}$, we may indeed lose the finite convergence property; see Example \ref{ex:NoFiniteConv}.
\end{remark}

\begin{proof}
  For each $k=0,1,2,\ldots,$ define $V_k$ in the same way as in \eqref{pr:main:Vk} with ``$[k]$'' replaced by ``$k$'' and assume that $\mathcal N$ is infinite. Since the control is $s$-intermittent, we have $\{k,k+1,\ldots,k+s-1\}\cap \mathcal N \neq \emptyset$ for all $k=0,1,2,\ldots$. Since  $r_{k+l} \geq r_{k+s}$ for all $l\in\{0,\ldots,s-1\}$,  inequality \eqref{pr:main:contradiction} becomes
  \begin{equation}\label{}
    \frac{2\alpha\lambda R}{{\Delta }}
     \sum_{n=1}^{\lfloor (k-K)/s \rfloor +1} r_{K+ns}
    \leq \frac{2\lambda R}{{\Delta }}
    \sum_{\substack{n=K\\n\in \mathcal N}}^k \alpha_{n} r_{n}
    \leq \|x_0-z\|^2
  \end{equation}
  for all $k\geq K$.  Observe that  due to monotonicity of the $r_k$'s,  the left-hand side tends to infinity as $k \to \infty$,  which  leads to a contradiction. Thus $\mathcal N$ has to be finite and by using the same argument as in the proof of Theorem \ref{thm:main}, we see that $x_k\in C\cap Q$ for some $k$.
\end{proof}

\begin{remark}[]
  The uniform boundedness of the subdifferential on bounded sets, which corresponds to condition (iii') of Example \ref{ex:phi2}, is a rather standard assumption; see \cite[Proposition 7.8]{BB96}. Note that condition (iii') is not mentioned explicitly in \cite{DePI88, IM86}, but only in \cite[Remark 16]{CCP11}. Nevertheless, it is satisfied therein because the set $I$ is finite and $\mathcal H=\mathbb R^n$. Thus Example \ref{ex:phi2} (in view of Remark \ref{rem:squareBrackets}) improves upon the results established in \cite{CCP11, DePI88, IM86}.
\end{remark}

\begin{remark}[{Comparison with \cite{BWWX15}}] \label{rem:comparisonWithBWWX15}
  Assume that $C = \fix T$, where $T\colon\mathcal H \to \mathcal H$ is a cutter ($m = 1$) and $\varphi(x):=1$. Then \eqref{thm:main:xk} becomes the iterative method proposed in \cite[equation (3)]{BWWX15}. Furthermore, Theorem \ref{thm:main} guarantees finite convergence if one only assumes that $r_k \to 0$, $\sum_{k=0}^\infty\alpha_kr_k=\infty$ and $C\cap \interior(Q) \neq \emptyset$. This extends both \cite[Theorems 3.1 and Theorem 3.2]{BWWX15}; compare with Table \ref{table}.
\end{remark}

\section{Stochastic Methods}\label{sec:mainResultsStoch}
In this section we consider a stochastic version of Theorem \ref{thm:main}. Let $(\Omega, \mathcal F, \prob)$ be a given probability space.

\begin{definition}\label{def:control:stoch}
  We call the sequence $\{I_k\}_{k=0}^\infty$ a \emph{random control sequence in} $I$ if $I_k\colon \Omega \to 2^I\setminus \{\emptyset\}$ are independent and identically distributed (set-valued) random variables on $(\Omega, \mathcal F, \prob)$ with $M:=\sup_{\omega,k}\#(I_k(\omega))<\infty$. If each $I_k(\omega)$ is single-valued, say $I_k(\omega) = \{i_k(\omega)\}$ for $i_k\colon \Omega \to I$, then we also call the sequence $\{i_k\}_{k=0}^\infty$ a \emph{random control} in $I$.
\end{definition}

\begin{remark}
  The phrase ``identically distributed'' means that
  $\prob(\{\omega\in \Omega \colon  I_k(\omega)=J\})=\prob(\{\omega\in \Omega \colon  I_n(\omega)=J\})$
  for all $k,n$ and all nonempty $J\subseteq I$ with $\#(J)\leq M$. The phrase ``independent'' means that
   $\prob ( \bigcap_{k\in K} \{\omega \in \Omega \colon  I_k(\omega)=J_k\} )
    = \prod_{k\in K} \prob (\{\omega \in \Omega \colon  I_k(\omega)=J_k\})$
  for all finite $K$ and all nonempty $J_k\subseteq I$ with $\#(J_k)\leq M$.
\end{remark}

Before formulating our next result, we establish a very intuitive lemma in view of which a random control is repetitive almost surely. We recall that $\{I_k(\omega)\}_{k=0}^\infty$ is repetitive in $I'$ if $I'\subseteq \bigcup_{k=n}^\infty I_k(\omega)$ for all $n=0,1,2,\ldots$.

\begin{lemma}\label{lem:repetitive}
  Let $\{I_k\}_{k=0}^\infty$ be a random control in $I$ and assume that $\prob(\{\omega\in \Omega \colon  i\in I_k(\omega)\})>0$ for all $i\in I'\subseteq I$. Then $\prob \left( \{\omega\in \Omega \colon  \{I_k(\omega)\}_{k=0}^\infty \text{ is repetitive in } I'\}\right)= 1.$
\end{lemma}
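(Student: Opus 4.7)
The plan is to reduce the claim to countably many applications of the second Borel--Cantelli lemma, one for each fixed $i\in I'$. First I would fix $i\in I'$ and consider the events
\[
A_{k,i}:=\{\omega\in\Omega\colon i\in I_k(\omega)\},\qquad k=0,1,2,\ldots
\]
Since the $I_k$'s are identically distributed, $\prob(A_{k,i})=\prob(A_{0,i})=:p_i$, and the hypothesis gives $p_i>0$. Since the $I_k$'s are independent (and each $A_{k,i}$ is determined by $I_k$ alone), the family $\{A_{k,i}\}_{k=0}^\infty$ is an independent family for the fixed $i$. Consequently $\sum_{k=0}^\infty \prob(A_{k,i})=\infty$, and the second Borel--Cantelli lemma yields
\[
\prob\Big(\limsup_{k\to\infty} A_{k,i}\Big)=1,
\]
i.e., with probability $1$ the index $i$ appears in $I_k(\omega)$ for infinitely many $k$, which is precisely the statement that $i\in\bigcup_{k=n}^\infty I_k(\omega)$ for every $n\geq 0$.

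Next I would combine these events across $i\in I'$. Because $I\subseteq\mathbb N_+$ is at most countable, the subset $I'$ is also at most countable, so
\[
\Omega_0:=\bigcap_{i\in I'} \limsup_{k\to\infty} A_{k,i}
\]
is a countable intersection of probability-$1$ events and therefore satisfies $\prob(\Omega_0)=1$. For every $\omega\in\Omega_0$ and every $i\in I'$ one has $i\in I_k(\omega)$ for infinitely many $k$, hence $I'\subseteq \bigcup_{k=n}^\infty I_k(\omega)$ for all $n\geq 0$. This is exactly repetitiveness of $\{I_k(\omega)\}_{k=0}^\infty$ in $I'$, so
\[
\prob\big(\{\omega\colon \{I_k(\omega)\}_{k=0}^\infty \text{ is repetitive in } I'\}\big)\geq \prob(\Omega_0)=1,
\]
which completes the proof.

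The only point that needs any care is verifying the independence of the events $\{A_{k,i}\}_k$ (for fixed $i$): this follows at once from the independence of the $\sigma$-algebras $\sigma(I_k)$, since $A_{k,i}=I_k^{-1}(\{J\subseteq I\colon i\in J,\ \#(J)\leq M\})\in\sigma(I_k)$. Everything else is a routine invocation of Borel--Cantelli plus countable subadditivity, so I expect no substantive obstacle.
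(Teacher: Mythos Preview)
Your proof is correct and follows essentially the same route as the paper: apply the second Borel--Cantelli lemma to the events $A_{k,i}=\{i\in I_k\}$ for each fixed $i\in I'$, then intersect over the at most countable set $I'$. The only cosmetic differences are that the paper verifies the independence of $\{A_{k,i}\}_k$ explicitly by decomposing into the atoms $\{I_k=J\}$ (rather than invoking independence of the $\sigma$-algebras $\sigma(I_k)$) and handles the countable intersection via induction plus continuity of $\prob$ rather than a direct appeal to countable subadditivity.
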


\begin{proof}
  Define the events $A_i^k:=\{\omega \in \Omega \colon  i\in I_k(\omega)\} $ and the family $    \mathcal J:=\{J\subseteq I \colon i\in J\text{ and } \# (J)\leq M \}$, where $M:=\sup_{\omega,k}\#(I_k(\omega))$. Since the events $A_J^k:=\{\omega \in \Omega \colon  I_k(\omega)=J\}$ are disjoint for different values of $J \in \mathcal J$, we have
  \begin{equation}\label{}
    \prob (A_i^k)
    = \prob\Big( \bigcup_{J\in \mathcal J} A_J^k\Big)
    = \sum_{J\in \mathcal J} \prob(A_J^k).
  \end{equation}
  By assumption, the variables $I_k$ are identically distributed.  Consequently,  we see that $\prob (A_i^k)=\prob(A_i^n)$ for all $i\in I$ and all $k,n=1,2,\ldots$. Hence the probability $p_i:=\prob(A_i^k)$ does not depend on $k$ and, by assumption, $p_i>0$ for all $i\in I'$.

  Moreover, the events $A_i^k$ are independent over $k$. For simplicity, we only show this for a pair $K=\{k,n\}$, $k\neq n$, although the argument holds for any finite set of indices $K$. Indeed, by disjointness (over $J$) and independence (over $k$) of the events $A_J^k$, we have
  \begin{align}\label{}\nonumber
    \prob(A_i^k\cap A_i^n)
    & = \prob \Big( \bigcup_{J\in\mathcal J} A_J^k
        \cap \bigcup_{J'\in\mathcal J} A_{J'}^n)\Big)
    = \prob \Big( \bigcup_{J,J'\in\mathcal J} (A_J^k \cap A_{J'}^n)\Big)
    \\&=\sum_{J,J'\in\mathcal J} \prob (A_J^k) \prob(A_{J'}^n)
    =\prob (A_i^k) \prob(A_i^n).
  \end{align}
  Consequently, for all $i\in I'$, we obtain $\sum_{k=0}^{\infty} \prob(A_i^k)=\sum_{k=0}^{\infty} p_i=\infty$ and, by applying the Borel-Cantelli lemma (see \cite[Theorem 8.3.4]{Dud04}) to $A_i := \limsup_{k\to\infty} A_i^k$, we have $\prob(A_i)=1$.

  Consider a decreasing sequence of sets $ E_{k} := \bigcap_{t=1}^k A_{i_t} $ where $k=1,2,\ldots,n$ for finite $I'=\{i_1,\ldots,i_n\}$, whereas $k=1,2,\ldots$ for infinite $I'=\{i_1,i_2,\ldots\}$. Clearly, the set $A_i$ consists of all $\omega \in \Omega$ for which the membership $i\in I_k(\omega)$ happens infinitely many times in the sequence $\{I_k(\omega)\}_{k=0}^\infty$. Bearing this in mind, we get
  \begin{equation}
    E := \bigcap_{k=1}^{\#(I')} E_k
    =\bigcap_{i\in I'} A_i
    = \{\omega\in \Omega \colon  \{I_k(\omega)\}_{k=0}^\infty \text{ is repetitive in $I'$}\}.
  \end{equation}

  We now show, by induction, that $\prob (E_k)=1$ for all $k$. Indeed, by definition, $\prob(E_1)=\prob(A_{i_1})=1$, as we have already observed above. Moreover,
  \begin{equation}\label{}
    \prob (E_{k+1}) = \prob(E_k \cap A_{i_{k+1}})
    = \prob(E_k) + \prob(A_{i_{k+1}}) - \prob(E_k \cup A_{i_{k+1}}).
  \end{equation}
  By induction, $\prob(E_k)=1$ and, since $\prob(A_{i_{k+1}})=1$, we conclude that $\prob(E_k \cup A_{i_{k+1}})=1$. Otherwise $\prob(E_{k+1})$ would be greater than one. Hence $ \prob(E_{k+1})  =1$, as asserted.

  Observe that if $I'$ is finite, then $E=E_n$ and consequently, $\prob(E) = \prob(E_n)=1$. On the other hand, if $I'$ is infinite, then $\{E_k\}_{k=0}^\infty$ is a decreasing sequence of events, where $ E_{k+1}\subseteq E_k$ and, by the continuity of $\prob$ (see \cite[Chapter II.9, Theorem E]{Hal50}), we have
  $
    \prob (E) = \lim_{k\to \infty}\prob (E_k)=1.
  $
  This completes the proof.
\end{proof}

\begin{theorem}\label{thm:main:stoch}
  Let the sequence $\{x_k\}_{k=0}^\infty$ be defined as in Theorem \ref{thm:main} using a random control $\{I_k\}_{k=0}^\infty$ in $I$ (that is, for each $\omega \in \Omega$ we define a sequence $\{x_k(\omega)\}_{k=0}^\infty$, where at each step $k$ the iterate $x_{k+1}(\omega)$ is obtained by using the realization $I_k(\omega)$ instead of $I_k(x_k)$, and where $x_0(\omega)=x_0\in Q$). Assume that conditions (i)--(iv) of Theorem \ref{thm:main} hold and that for all $x\notin C$, we have
  \begin{equation}\label{thm:main:stoch:PrAssumption}
    \prob(\{\omega\in \Omega \colon  I_k(\omega)\cap I_+(x)\neq\emptyset\})>0.
  \end{equation}
  Then, by \eqref{thm:main:stoch:PrAssumption}, almost surely, the control sequence $\{I_k\}_{k=0}^\infty$ is well matched with $C$ (condition (v)). Furthermore, almost surely, $x_k \in C\cap Q$ for some $k$ given that $\{x_k\}_{k=0}^\infty$ is bounded (with some positive probability).
\end{theorem}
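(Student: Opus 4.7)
The plan is to reduce the stochastic statement to the deterministic Theorem \ref{thm:main} applied pathwise, after showing that the random control is almost surely well matched with $C$. The bridge to Lemma \ref{lem:repetitive} is the identification of an index subset with positive one-step membership probability.

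First, I would define
$$I' := \{i \in I : \prob(\{\omega \in \Omega : i \in I_k(\omega)\}) > 0\},$$
which, by the identical distribution of the $I_k$, does not depend on $k$. The auxiliary claim is that $I' \cap I_+(x) \neq \emptyset$ for every $x \notin C$. Indeed, if $I_+(x) \cap I' = \emptyset$ for some $x \notin C$, then since $I$ (and hence $I_+(x)$) is at most countable, countable subadditivity yields
$$\prob(\{\omega : I_k(\omega) \cap I_+(x) \neq \emptyset\}) \leq \sum_{i \in I_+(x)} \prob(\{\omega : i \in I_k(\omega)\}) = 0,$$
contradicting \eqref{thm:main:stoch:PrAssumption}. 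Since $I'$ satisfies the hypothesis of Lemma \ref{lem:repetitive} by construction, there exists an event $\Omega' \in \mathcal F$ with $\prob(\Omega') = 1$ such that for every $\omega \in \Omega'$ the realization $\{I_k(\omega)\}_{k=0}^\infty$ is repetitive in $I'$.

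Next, I would verify that on $\Omega'$ the control is well matched with $C$. Fix $\omega \in \Omega'$ and $x \notin C$; by the auxiliary claim, pick $i \in I_+(x) \cap I'$. Repetitiveness in $I'$ gives $i \in I_k(\omega)$ for infinitely many indices $k$, whence $I_k(\omega) \cap I_+(x) \neq \emptyset$ infinitely often, which is condition \eqref{def:matchedControl:multi}.

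Finally, let $B := \{\omega : \{x_k(\omega)\}_{k=0}^\infty \text{ is bounded}\}$, assumed to have positive probability. For each $\omega \in \Omega' \cap B$, the realization $\{I_k(\omega)\}_{k=0}^\infty$ is a deterministic nonadaptive control in the sense of Definition \ref{def:control:det}, which, together with the (deterministic) data $\{\alpha_k\}$, $\{r_k\}$, $\{\lambda_{i,k}\}$ and $\{\varphi_i\}$, satisfies conditions (i)--(v) of Theorem \ref{thm:main} and produces bounded iterates. Applying Theorem \ref{thm:main} pathwise gives $x_k(\omega) \in C \cap Q$ for some $k$; since $\prob(\Omega' \cap B) = \prob(B)$, the asserted almost sure finite convergence on $B$ follows. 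The only subtle point in the whole argument is the passage from the set-wise assumption \eqref{thm:main:stoch:PrAssumption} to the individual-index hypothesis required by Lemma \ref{lem:repetitive}; this is resolved by the countability of $I$ via the union bound above.
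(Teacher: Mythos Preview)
Your proof is correct and follows essentially the same approach as the paper: define $I'$, use countable subadditivity to show that \eqref{thm:main:stoch:PrAssumption} forces every violated-constraint set $I_+(x)$ to meet $I'$, apply Lemma \ref{lem:repetitive}, and then invoke Theorem \ref{thm:main} pathwise. The only cosmetic difference is that the paper phrases the auxiliary claim as the set equality $C=\bigcap_{i\in I'}C_i$ and then cites Proposition \ref{thm:cardCond}(iii)$\Rightarrow$(i) to pass from repetitive-in-$I'$ to well matched, whereas you state the equivalent condition $I'\cap I_+(x)\neq\emptyset$ and inline that implication directly.
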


\begin{proof}
  Define the following events:
  $E_0 :=  \{\omega \in \Omega \colon  \{x_k(\omega)\}_{k=0}^\infty
     \text{ is bounded}\}$, $E_1 := \{\omega \in \Omega \colon  \{I_k(\omega)\}_{k=0}^\infty$ $
     \text{ is repetitive in } I'\}$, $E_2 := \{\omega \in \Omega \colon \{I_k(\omega)\}_{k=0}^\infty$ is well matched with $C\}$ and $E_3 := \{\omega \in \Omega \colon  x_k(\omega) \in C\cap Q \text{ for some }k \}$, where, as in Lemma \ref{lem:repetitive}, $I':=\{i\in I \colon \prob(A_i^k)>0\}$ and $A_i^k:=\{\omega \in \Omega \colon  i\in I_k(\omega)\}$.
  It suffices to show that $\prob(E_3 \mid E_0) = 1$.

  We first demonstrate that  $C=\bigcap_{i\in I'}C_i$. Suppose to the contrary that $x\in \bigcap_{i\in I'}C_i \setminus C$. Clearly, $I_+(x)\subseteq I\setminus I'$ and thus $\prob (A_i^k)=0$ for all $i\in I_+(x)$. By \eqref{thm:main:stoch:PrAssumption}, we obtain
  \begin{equation}
    0 < \prob(\{\omega\in \Omega \colon I_k(\omega)\cap I_+(x)\neq \emptyset\})
    =  \prob\Big( \bigcup_{i\in I_+(x)}A_i^k\Big)
    \leq \sum_{i\in I_+(x)}\prob(A_i^k)=0,
  \end{equation}
  a contradiction.

  Consequently, by Proposition \ref{thm:cardCond}, we get $E_1 \subseteq E_2$.  Moreover, by Lemma \ref{lem:repetitive}, we get $\prob(E_1) = 1$. Thus $\prob(E_2) = 1$, but also $\prob(E_0 \cup E_2) = 1$. On the other hand, by Theorem \ref{thm:main}, we have $E_0 \cap E_2 \subseteq E_3 = E_0 \cap E_3$, where, by assumption, $\prob(E_0)>0$. Hence,
  \begin{equation}\label{}
    \prob(E_0\cap E_3) \geq \prob(E_0 \cap E_2)
    = \prob(E_0) + \prob(E_2) - \prob(E_0 \cup E_2)
    = \prob(E_0)
  \end{equation}
  and we arrive at $\prob(E_3 \mid E_0) = \prob(E_0\cap E_3)/\prob(E_0) \geq \prob(E_0)/\prob(E_0) =1$.
\end{proof}

\begin{remark}[{Comparison with \cite{Pol01}}]\label{ex:Methods_with_StochSVC}
  If $\{i_k\}_{k=0}^\infty$ is a single-valued random control in $I$. Then condition \eqref{thm:main:stoch:PrAssumption} recovers \cite[Assumption 2]{Pol01}, that is, $\prob(\{\omega\in \Omega \colon  i_k(\omega)\in I_+(x)\})>0$ for all $x\notin C$. In particular, Theorem \ref{thm:main:stoch} recovers \cite[Theorem 1]{Pol01} reduced to at most countably infinite number of constraints.
\end{remark}

\textbf{Acknowledgements.} We thank two anonymous referees for their useful comments and helpful suggestions. This work was partially supported by the Israel Science Foundation (Grants 389/12 and 820/17), the Fund for the Promotion of Research at the Technion and by the Technion General Research Fund.

\vspace{-1em}

\bibliographystyle{siam}
\footnotesize
\addcontentsline{toc}{section}{References}
\bibliography{references}

\begin{thebibliography}{10}

\bibitem{BB96}
{\sc H.~H. Bauschke and J.~M. Borwein}, {\em On projection algorithms for
  solving convex feasibility problems}, SIAM Rev., 38 (1996), pp.~367--426.

\bibitem{BC01}
{\sc H.~H. Bauschke and P.~L. Combettes}, {\em A weak-to-strong convergence
  principle for {F}ej\'{e}r-monotone methods in {H}ilbert spaces}, Math. Oper.
  Res., 26 (2001), pp.~248--264.

\bibitem{BC17}
\leavevmode\vrule height 2pt depth -1.6pt width 23pt, {\em Convex analysis and
  monotone operator theory in {H}ilbert spaces}, CMS Books in
  Mathematics/Ouvrages de Math\'{e}matiques de la SMC, Springer, Cham,
  second~ed., 2017.
\newblock With a foreword by H\'{e}dy Attouch.

\bibitem{BD17}
{\sc H.~H. Bauschke and M.~N. Dao}, {\em On the finite convergence of the
  {D}ouglas-{R}achford algorithm for solving (not necessarily convex)
  feasibility problems in {E}uclidean spaces}, SIAM J. Optim., 27 (2017),
  pp.~507--537.

\bibitem{BDNP16}
{\sc H.~H. Bauschke, M.~N. Dao, D.~Noll, and H.~M. Phan}, {\em On {S}later's
  condition and finite convergence of the {D}ouglas-{R}achford algorithm for
  solving convex feasibility problems in {E}uclidean spaces}, J. Global Optim.,
  65 (2016), pp.~329--349.

\bibitem{BNPH15}
{\sc H.~H. Bauschke, D.~Noll, and H.~M. Phan}, {\em Linear and strong
  convergence of algorithms involving averaged nonexpansive operators}, J.
  Math. Anal. Appl., 421 (2015), pp.~1--20.

\bibitem{BWWX15}
{\sc H.~H. Bauschke, C.~Wang, X.~Wang, and J.~Xu}, {\em On the finite
  convergence of a projected cutter method}, J. Optim. Theory Appl., 165
  (2015), pp.~901--916.

\bibitem{BLT15}
{\sc J.~M. Borwein, G.~Li, and M.~K. Tam}, {\em Convergence rate analysis for
  averaged fixed point iterations in common fixed point problems}, SIAM J.
  Optim., 27 (2017), pp.~1--33.

\bibitem{CP01}
{\sc G.~{Calafiore} and B.~T. {Polyak}}, {\em Stochastic algorithms for exact
  and approximate feasibility of robust lmis}, IEEE Transactions on Automatic
  Control, 46 (2001), pp.~1755--1759.

\bibitem{Ceg12}
{\sc A.~Cegielski}, {\em Iterative methods for fixed point problems in
  {H}ilbert spaces}, vol.~2057 of Lecture Notes in Mathematics, Springer,
  Heidelberg, 2012.

\bibitem{CC11}
{\sc A.~Cegielski and Y.~Censor}, {\em Opial-type theorems and the common fixed
  point problem}, in Fixed-point algorithms for inverse problems in science and
  engineering, vol.~49 of Springer Optim. Appl., Springer, New York, 2011,
  pp.~155--183.

\bibitem{CRZ18}
{\sc A.~Cegielski, S.~Reich, and R.~Zalas}, {\em Regular sequences of
  quasi-nonexpansive operators and their applications}, SIAM J. Optim., 28
  (2018), pp.~1508--1532.

\bibitem{CCP11}
{\sc Y.~Censor, W.~Chen, and H.~Pajoohesh}, {\em Finite convergence of a
  subgradient projections method with expanding controls}, Appl. Math. Optim.,
  64 (2011), pp.~273--285.

\bibitem{CH10}
{\sc W.~Chen and G.~T. Herman}, {\em Efficient controls for finitely convergent
  sequential algorithms}, ACM Trans. Math. Software, 37 (2010), pp.~1--23.

\bibitem{Com96}
{\sc P.~L. Combettes}, {\em The convex feasibility problem in image recovery},
  Advances in Imaging and Electron Physics, 95 (1996), pp.~155--270.

\bibitem{Com01}
{\sc P.~L. Combettes}, {\em Quasi-{F}ej\'erian analysis of some optimization
  algorithms}, in Inherently parallel algorithms in feasibility and
  optimization and their applications ({H}aifa, 2000), vol.~8 of Stud. Comput.
  Math., North-Holland, Amsterdam, 2001, pp.~115--152.

\bibitem{Cro04}
{\sc G.~Crombez}, {\em Finding common fixed points of a class of
  paracontractions}, Acta Math. Hungar., 103 (2004), pp.~233--241.

\bibitem{DePI88}
{\sc A.~R. De~Pierro and A.~N. Iusem}, {\em A finitely convergent
  ``row-action'' method for the convex feasibility problem}, Appl. Math.
  Optim., 17 (1988), pp.~225--235.

\bibitem{Dud04}
{\sc R.~M. Dudley}, {\em Real analysis and probability}, Cambridge University
  Press, 2004.

\bibitem{Fuk82}
{\sc M.~Fukushima}, {\em A finitely convergent algorithm for convex
  inequalities}, IEEE Transactions on Automatic Control, 27 (1982),
  pp.~1126--1127.

\bibitem{Hal50}
{\sc P.~R. Halmos}, {\em Measure theory}, D. Van Nostrand Company, Inc., New
  York, 1950.

\bibitem{HC08}
{\sc G.~T. Herman and W.~Chen}, {\em A fast algorithm for solving a linear
  feasibility problem with application to intensity-modulated radiation
  therapy}, Linear Algebra Appl., 428 (2008), pp.~1207--1217.

\bibitem{HLS19}
{\sc N.~Hermer, D.~R. Luke, and A.~Sturm}, {\em Random function iterations for
  consistent stochastic feasibility}, Numer. Funct. Anal. Optim., 40 (2019),
  pp.~386--420.

\bibitem{IM86}
{\sc A.~N. Iusem and L.~Moledo}, {\em A finitely convergent method of
  simultaneous subgradient projections for the convex feasibility problem},
  Mat. Appl. Comput., 5 (1986), pp.~169--184.

\bibitem{IM87}
\leavevmode\vrule height 2pt depth -1.6pt width 23pt, {\em On finitely
  convergent iterative methods for the convex feasibility problem}, Bol. Soc.
  Brasil. Mat., 18 (1987), pp.~11--18.

\bibitem{LTT20}
{\sc D.~R. Luke, M.~Teboulle, and N.~H. Thao}, {\em Necessary conditions for
  linear convergence of iterated expansive, set-valued mappings}, Math.
  Program., 180 (2020), pp.~1--31.

\bibitem{Ned11}
{\sc A.~{Nedi\'{c}}}, {\em Random projection algorithms for convex set
  intersection problems}, in 49th IEEE Conference on Decision and Control
  (CDC), 2010, pp.~7655--7660.

\bibitem{Pan15}
{\sc C.~H.~J. Pang}, {\em Set intersection problems: supporting hyperplanes and
  quadratic programming}, Math. Program., 149 (2015), pp.~329--359.

\bibitem{Pol01}
{\sc B.~T. Polyak}, {\em Random algorithms for solving convex inequalities}, in
  Inherently parallel algorithms in feasibility and optimization and their
  applications ({H}aifa, 2000), vol.~8 of Stud. Comput. Math., North-Holland,
  Amsterdam, 2001, pp.~409--422.

\end{thebibliography}

\normalsize
\vspace{-1em}
\section*{Appendix} \label{sec:Appendix}
\addcontentsline{toc}{section}{Appendix}
\begin{example}\label{ex:NoFiniteConv}
  We show that the  relaxed alternating projection method, where $\alpha = 1/2$,  may fail to converge in finitely many steps if the sequence of overrelaxations $\{r_k\}_{k=0}^\infty$ is not monotone and when we use $r_k$ instead of $r_{[k]}$; compare with Remark \ref{rem:squareBrackets} (e). To this end, consider the CFP with $Q=\mathcal H=\mathbb R^2$, $C_1:=\{(x,y)\colon x\leq 0\}$ and $C_2:=\{(x,y)\colon y\leq 0\}$. Clearly, $C_1 \cap C_2 = (-\infty,0] \times (-\infty,0]$. Define
  \begin{equation}\label{}
    i_k:=
    \begin{cases}
      1, & \mbox{if $k$ is even} \\
      2, & \mbox{otherwise,}
    \end{cases}
    \quad \text{and} \quad
    r_k:=
    \begin{cases}
      \frac 1 {k+1}, & \mbox{if $k$ is even} \\
      \frac 1 {2^k}, & \mbox{otherwise.}
    \end{cases}
  \end{equation}
  Set  $(x_0,y_0):=(1,1)$ and
  \begin{equation}\label{ex:NoFiniteConv:xkyk}
    (x_{k+1},y_{k+1}):= (x_k,y_k)
    + \frac{r_k+d\big((x_k,y_k), C_{i_k}\big)}{2d\big((x_k,y_k), C_{i_k}\big)} \Big(P_{C_{i_k}}\big((x_k,y_k)\big) - (x_k,y_k) \Big).
  \end{equation}
  Then $r_k\to 0$ and $\sum_{k=0}^{\infty}r_k=\infty$, but $(x_k,y_k)\notin C_1\cap C_2$ for all $k=0,1,2,\ldots$. Indeed, observe that $x_k=0$ for all $k=1,2,\ldots$. Moreover, $y_0=y_1=1$ and, by induction,
  \begin{equation}\label{}
    y_{2k-1}=y_{2k-2} \text{\quad and \quad }
    y_{2k}= y_{2k-1} + \frac{\frac 1 {2^{2k-1}}+y_{2k-1}}{2y_{2k-1}} (0- y_{2k-1})
    =\frac 1 {2^{2k}}>0.
  \end{equation}
\end{example}

\begin{example}\label{ex:NoFiniteConv2}
  We show that the subgradient projection method \eqref{ex:phi2:xk}, when combined with a repetitive control (as in \cite{CCP11}), may fail to converge in finitely many steps if we choose to use ``$r_k$'' instead of ``$r_{[k]}$'', even though the sequence of overrelaxations $\{r_k\}_{k=0}^\infty$ is decreasing. Indeed, consider the CFP with $Q=\mathcal H=\mathbb R^2$, $C_1:=\{(x,y)\colon f_1(x,y)\leq 0\}$ and $C_2:=\{(x,y)\colon f_2(x,y)\leq 0\}$, where $f_1(x,y):=|y|-1$ and $f_2(x,y):= x^2-1$. Thus $C_1\cap C_2=[-1,1]\times[-1,1]$ and the Slater condition is satisfied since $f_1(0,0)=f_2(0,0)=-1<0$. Define two auxiliary sequences $\{a_k\}_{k=0}^\infty$ and $\{b_k\}_{k=0}^\infty$ by
  \begin{equation}\label{ex:NoFiniteConv2:akbk}
    a_k:=\frac{1}{k+1} \text{\qquad and\qquad} b_0:=\frac 1 2,
    \quad b_{k+1}:=\frac{b_k}{\left( \frac{2\sqrt 2}{\sqrt{b_k}}+4\right)^2},
  \end{equation}
  and let the sequence of overrelaxations $\{r_k\}_{k=0}^\infty$ consist of all the elements of $\{a_k\}_{k=0}^\infty$ and $\{b_k\}_{k=0}^\infty$ sorted in a decreasing order, that is,
  \begin{multline}
    \{r_k\}_{k=0}^\infty= \left\{a_0=1,\ a_1=\frac 1 2,\ b_0= \frac 1 2,\ a_2=\frac 1 3,\ \ldots\ ,\right.\\
    \left. a_{127}=\frac{1}{128},\ b_1=\frac{1}{128},\ a_{128}=\frac{1}{129},\ \ldots \right\}.
  \end{multline}
  Observe that $r_k\to 0$ monotonically and $\sum_{k=0}^{\infty}r_k=\infty$, as required in \cite{CCP11}. Indeed, the former condition follows from the inequality $b_{k+1}\leq\frac{b_k}{16}$ and the latter one from the definition of $a_k$. Let $m_k$ and $n_k$ denote the position (we start counting from $0$) of $a_k$ and $b_k$ in the sequence $\{r_k\}_{k=0}^\infty$, respectively, and define the control sequence $\{i_k\}_{k=0}^\infty$ by $i_{m_k}:=1$ and $i_{n_k}:=2$. It is not difficult to see that $\{i_k\}_{k=0}^\infty$ is repetitive. Following \eqref{ex:phi2:xk}, we define
  \begin{equation}\label{ex:NoFiniteConv2:xkyk}
    (x_0,y_0):=(2,2),\qquad
    (x_{k+1},y_{k+1}):= (x_{k+1},y_{k+1}) - \frac{r_k+f_{i_k}(x_k,y_k)}{\|g_{i_k}(x_k,y_k)\|^2} g_{i_k}(x_k,y_k)
  \end{equation}
  whenever $f_{i_k}(x_k,y_k)>0$ and $(x_{k+1},y_{k+1}):=(x_k,y_k)$ otherwise, where $g_{i_k}(x_k,y_k) \in \partial f_{i_k}(x_k,y_k)$.
    Then $(x_k,y_k)\notin C_1\cap C_2$ for all $k=0,1,2,\ldots$.
\end{example}

\begin{proof}
  Observe that by \eqref{ex:NoFiniteConv2:xkyk}, we get
  \begin{equation}\label{ex:NoFiniteConv2:x_mk:y_mk}
    x_{m_k+1}=x_{m_k} \text{\qquad and\qquad}
    y_{m_k+1}=
    \begin{cases}
      1-r_{m_k}, & \text{ if } y_{m_k}>1 \\
      r_{m_k}-1, & \text{ if } y_{m_k}<-1 \\
      y_{m_k}, & \text{ otherwise}.\\
    \end{cases}
  \end{equation}
  Moreover,
  \begin{equation}\label{ex:NoFiniteConv2:x_nk:y_nk}
    x_{n_k+1}=\frac{1}{2}\left(x_{n_k}+ \frac{1-r_{n_k}}{x_{n_k}}\right)
    \text{\qquad and\qquad}
    y_{n_k+1}= y_{n_k}.
  \end{equation}
  On the other hand, by the definition of $m_k$ and $n_k$, we have $r_{m_k}=a_k$ and $r_{n_k}=b_k$. Consequently, by the choice of the starting point, we obtain $y_k=0$ for all $k=1,2,\ldots$.

  We claim that $x_{n_k}=1+\sqrt{2b_k}$. Indeed, by the equality $n_0=2$ and by \eqref{ex:NoFiniteConv2:x_mk:y_mk}, we have
  \begin{equation}\label{}
    1+\sqrt{2b_0}=x_0=x_1=x_2=x_{n_0}.
  \end{equation}
  Observe that, by \eqref{ex:NoFiniteConv2:x_mk:y_mk}, we also obtain $x_{n_{k+1}}=x_{n_k+1}$. Consequently, by \eqref{ex:NoFiniteConv2:x_nk:y_nk} and by induction,
  \begin{equation}\label{}
    x_{n_{k+1}}
    =\frac{x_{n_k}^2+1-b_k}{2x_{n_k}}
    =\frac{(1+\sqrt{2b_k})^2+1-b_k}{2(1+\sqrt{2b_k})}
    =1+\frac{b_k}{2+2\sqrt{2b_k}}
    =1+\sqrt{2b_{k+1}}.
  \end{equation}

  Using the positivity of $b_k$, we see that $x_{n_k}>1$ which, when combined with \eqref{ex:NoFiniteConv2:x_mk:y_mk}, yields that $x_k>1$ for all $k=0,1,2,\ldots$. This implies that $(x_k,y_k)\notin C_1\cap C_2$ for all $k=0,1,2,\ldots,$ as claimed.
\end{proof}

\end{document}